\documentclass{article}
\usepackage{latexsym, amsmath, amssymb}
\usepackage{graphicx}
\usepackage{soul}
\usepackage{color}
\usepackage{float}
\usepackage{amsmath}
\usepackage{epstopdf}
\usepackage{amsmath, amsthm, amscd, amsfonts}
\usepackage{amsmath}
\usepackage{array}
\usepackage{hyperref}
\usepackage{multirow}
\usepackage{amssymb}
\usepackage{amsfonts}
\usepackage{amsthm}
\usepackage{float}
\usepackage{xcolor}
\usepackage{rotating}
\usepackage{longtable} 
\usepackage{enumerate}
\usepackage{color,soul}
\usepackage[a4paper, total={7in, 10in}]{geometry}
\usepackage{lineno}
\newtheorem{exm}{Example}[section]
 \newtheorem{prop}{Proposition}[section]
 \newtheorem{defn}{Definition}[section]
\newtheorem{rem}{Remark}[section]
\newtheorem{theorem}{Theorem}[section]
 \newtheorem{lem}{Lemma}[section]
\makeatletter
\newcommand*{\rom}[1]{\expandafter\@slowromancap\romannumeral #1@}
\makeatother

\usepackage{xcolor}

\def\bege{\begin{equation}} \def\ende{\end{equation}}

   \def\begr{\begin{eqnarray}}
\def\endr{\end{eqnarray}} 
 
\def\bege{\begin{equation}} \def\ende{\end{equation}}
\def\begr{\begin{eqnarray}} \def\endr{\end{eqnarray}} \def\bnum{\begin{enumerate}} \def\enum{\end{enumerate}}

\begin{document}
\begin{center}{\Large
 \textbf{Fault-tolerant metric basis and dimension of barycentric subdivision of zero divisor graphs}}

 \begin{center}
Vidya S$^{1}$, Sunny Kumar Sharma$^{2}$ Prasanna Poojary $^{1}$$^{\ast}$, Vadiraja Bhatta G R$^{1}$, 
\end{center}

\begin{center}
 $^{1}$ Manipal Institute of Technology, Manipal Academy of Higher Education, Manipal, Karnataka, India\\
 $^{2}$ School of Mathematics, Shri Mata Vaishno Devi University, Katra-182320, Jammu and Kashmir, India\\ 
 % {\it \textcolor{blue}{\underline{vidyassdk@gmail.com}}}\\
 
 % {\it \textcolor{blue}{\underline{sunny.sharma@manipal.edu; sunnysrrm94@gmail.com}}}\\
 % $^{3}$ Department of Mathematics, Manipal Institute of Technology Bengaluru, \\Manipal Academy of Higher Education, Manipal, Karnataka, India\\ 
 % {\it \textcolor{blue}{\underline{poojary.prasanna@manipal.edu; poojaryprasanna34@gmail.com}}}\\
 % $^{3}$ Department of Mathematics, Manipal Institute of Technology, \\Manipal Academy of Higher Education, Manipal, Karnataka, India\\
 % {\it \textcolor{blue}{\underline{ vadiraja.bhatta@manipal.edu}}}\\
 
\end{center}
 \end{center}
 \textbf{Abstract} The undirected zero divisor graph of a commutative ring with unity \( R \), denoted by \( \Gamma(R) = (V(\Gamma(R)), E(\Gamma(R))) \). The vertex set \( V(\Gamma(R)) \) consists of all the non-zero zero-divisors of \( R \). The edge set \( E(\Gamma(R)) \) is defined by the set \( \{ e = a_1 a_2 \mid a_1 \cdot a_2 = 0 \text{ and } a_1, a_2 \in V(\Gamma(R)) \} \). The barycentric subdivision of $\Gamma$ is the process of subdividing each edge by inserting new vertex in the graph $\Gamma$. In this article, we have focused on the fault-tolerant metric dimension of the barycentric subdivision of zero divisor graph of the group of integers modulo \( n \), represented by \( fdim(BS(\Gamma(\mathbb{Z}_n )\), where \( n = pq \); \( p \) and \( q \) are distinct odd primes with \( q > p \). We also demonstrate that \( fdim(BS(\Gamma(\mathbb{Z}_n) \geq q - 1 \) for every \( n = pq \), where \( p \) and \( q \) are any distinct odd primes with \( q > p \). \\\\
\textbf{MSC(2020):} 05C12, 05C25.\\\\
\textbf{Keywords:} Commutative rings, zero divisor graph, barycentric subdivision, fault-tolerant resolving set, fault-tolerant metric dimension \\\\
 $^{\ast}$Corresponding: \textcolor{blue}{\underline{poojary.prasanna@manipal.edu}}

% Let $R$ be a commutative ring with unity 1 and let $ G(V,E)$ be a simple, connected, nontrivial graph. Let $d(p,q)$ be the distance between the
% % vertices $p$ and $q$ in $G$. An undirected zero divisor graph of a ring $R$ is a graph, denoted by $\Gamma(R)=(V(\Gamma(R)),E(\Gamma(R)))$, 
% %  whose vertex set $V(\Gamma(R))$ consists of all the nonzero divisor of $R$ and 
% %  $E(\Gamma(R))=
% %   \{e=p_1p_2|p_1.p_2=0 
% %   \  p_1, p_2 \in V(\Gamma(R))\}$. 
% %   In this article, we have considered a zero divisor graph of a group of integers modulo $n$, i.e, $\Gamma(\mathbb Z_n)$, where n=pq and p and q are two distinct primes with $q\geq 2p-1$. We determine its metric  and FTMD of the barycentric subdivision of the zero divisor graph $\Gamma(\mathbb Z_n)$.\\\\

\section{Introduction} 
Let \( G(V, E) \) be a simple graph with a vertex set \( V(G) \) and an edge set \( E(G) \). The degree of a vertex \( v \), denoted as \( d(v) \), indicates the number of edges incident with vertex $v$. A bipartite graph is a graph in which the vertex set is divided into two disjoint sets, denoted by \( V_1 \) and \( V_2 \). Every edge \( e = ab \) in \( E(G) \) connects a vertex \( a \) from one set to a vertex \( b \) from the other set, meaning either \( a \in V_1 \) and \( b \in V_2 \), or \( a \in V_2 \) and \( b \in V_1 \). A complete bipartite graph is a bipartite graph in which every vertex in \( V_1 \) is adjacent to every vertex in \( V_2 \). This complete bipartite graph, with \(|V_1| = m\) and \(|V_2| = n\) vertices, is denoted by \( K_{m,n} \). For foundational concepts in graph theory, we recommend the books by West \cite{w} and by Bondy and Murty \cite{bo}.\\

\noindent Beck \cite{beck} was the first to introduce the concept of a zero-divisor graph, denoted by \(\Gamma(R)\), for a commutative ring \(R\). His work established a connection between ring theory and graph theory, with a primary focus on finitely colorable rings. Subsequently, Anderson and Livingston \cite{anderson} presented a new perspective on associating zero-divisors with \(R\). They defined the undirected zero-divisor graph of a ring \(R\) as a graph \(\Gamma(R) = (V(\Gamma(R)), E(\Gamma(R)))\), where the vertex set \(V(\Gamma(R))\) comprises of all the non-zero zero-divisors of \(R\). The edge set \(E(\Gamma(R))\) is defined as follows: 

\[ 
E(\Gamma(R)) = \{e = p_1p_2 \,|\, p_1 \cdot p_2 = 0 \text{ and } p_1, p_2 \in V(\Gamma(R))\}. \]
We will follow the approach outlined by Anderson and Livingston throughout this paper, focusing exclusively on non-zero zero-divisors as the vertices of the graph \(\Gamma(R)\). Redmond \cite{redmond} later expanded the definition of a zero-divisor graph to include non-commutative rings. Furthermore, Demeyer et al. \cite{demyer} generalised the concept of zero-divisor graphs from rings to semigroups. Additionally, Lucas \cite{lucas} analysed the diameter of \(\Gamma(R)\). Behboodi \cite{behboodi} introduced the concept of zero-divisor graphs for modules over commutative rings. For more information on this topic, one can refer to \cite{ak, pir}.\\

\noindent The concept of metric dimension (MD) was introduced independently by Slater \cite{sl} and by Harary and Melter \cite{ha}. Slater focused on MD of trees, while Harary and Melter explored the MD of complete graphs, cycles, and bipartite graphs. Slater referred the resolving set and metric dimension as locating set and location number, respectively. In contrast, Harary and Melter used the terms resolving set and MD. This paper will consistently use the terms resolving set and MD instead of locating set and location number. Chartrand and Zhang \cite{cz} made a significant advancement by proposing the use of members of the metric basis as sensors in various security applications. These sensors play a crucial role in identifying threats, but their effectiveness can be compromised if one of them malfunctions. Such failures could leave critical vulnerabilities in the system, making it difficult to adequately respond to potential intruders, whether they are threats like fire or burglary. To counteract this challenge, Hernando et al. \cite{her} introduced the concept of fault-tolerant metric dimension (FTMD). This innovative approach is designed to enhance the reliability of the sensor network. The central idea of a fault-tolerant resolving set (FTRS) is to ensure that the network remains functional in the event that one sensor fails. With this design, the remaining sensors can still collaboratively identify and respond to intrusions, thereby reducing the risk of system failure. Ultimately, this development underscores that the potential applications of FTMD extend across a wide range of fields. This versatility makes FTMD an essential consideration in the design of robust and reliable security systems.\\

\noindent Determining the FTMD of a graph presents an interesting and complex combinatorial challenge, with significant potential applications in sensor networks. This area of research has so far examined a variety of basic families of graphs, leading to important findings. In their foundational article, Hernando et al. \cite{her} not only established the FTMD specifically for trees but also provided a theoretical upper bound applicable to the FTMD of arbitrary graphs. This work set the stage for further exploration into more complex structures. Expanding upon this foundation, Raza et al. \cite{hara} focused their research on convex polytopes, determining their FTMD. They highlighted a particularly significant result: certain infinite families of convex polytopes maintain a constant FTMD, which has profound implications for the design and analysis of various network structures. Following that, Saha et al. \cite{saha} investigated the FTMD of circulant graphs. Recently, Sharma and Bhat \cite{sbha} focused on the FTMD heptagonal circular ladder. Their work emphasizes the growing interest in specific graph structures and their unique properties concerning fault tolerance. Those interested in exploring further applications of the FTMD can refer to \cite{salman}.
 \\

\noindent Pirzada and Raja \cite{pir} were the first to investigate the MD of a zero-divisor graph. They examined the MD of $\Gamma(R \times F_s)$, where $F_s$ is a finite field. Additionally, they computed the MD of $\Gamma(R_1 \times R_2 \times R_3 \ldots \times R_r)$, where all $R_i$ (for $1 \leq i \leq r$) are finite commutative rings. Subsequently, Pirzada et al. \cite{pi} determined both the MD and upper dimensions of zero-divisor graphs associated with commutative rings, as well as various properties of the MD in these graphs. Later, Sharma and Bhat \cite{ss} investigated the FTMD of a zero-divisor graph and also examined the line graph of a zero-divisor graph associated with a commutative ring. \\

\noindent  An operation that involves splitting an edge into two edges by inserting a new vertex into the interior of the edge is called subdivision an edge. When this operation is applied to a sequence of edges in a graph $G$, it is referred to as subdividing the graph \( G \). The resulting graph is known as the subdivision of graph \( G \). If all edges in a graph are subdivided, it is called a barycentric subdivision of the graph (\cite{koam}). Our research will investigate the FTMD of the barycentric subdivision of a certain type of zero divisor graph.\\

 \noindent  In the next section, we will discuss the basic definitions and some results related to MD, FTMD, and zero divisor graphs. Following that, in Section 3, we will examine the FTMD of a barycentric subdivision of \(\Gamma(Z_{pq})\) for cases where \(p \geq 3\), and \(q \geq 2p - 3\). Finally, we will conclude that the FTMD of the barycentric subdivision of \(\Gamma(Z_{pq})\) is greater than or equal to \(q - 1\), where \(p\) and \(q\) are distinct odd primes with \(p \geq 3\) and \(q > p\).
\section{Preliminaries}
This section includes the fundamental definitions and results on MD, FTMD, and zero divisor graphs, which are essential for proving our main results.
\begin{defn}\cite{su}
\noindent The neighbourhood (open neighbourhood) of a vertex $p$ in a graph $G=(V, E)$, is denoted by $N(p)$, and is defined as 
$N(p) = \{x \in V(G) | px \in E(G)\}.$ The closed neighborhood of a vertex $p$, denoted by $N[p]$, it is the set $\{p\} \cup
N(p)$.
 \end{defn}
  \begin{defn}\cite{cht} 
A vertex $v\in V(G)$ resolves a pair of vertices $x_1$ and $x_2$, if $d(v,x_1)\neq d(v,x_2)$, where $x_1, x_2 \in V(G)$.
 \end{defn}
 \begin{defn}\cite{ha}		
\noindent Let  $A=\{v_1, v_2, v_3, \ldots,v_k\}$ be an ordered set of vertices in $G$. The metric coordinate$/$metric code of a vertex $s$ is denoted by $\delta(s|A)$, and defined as $\delta(s|A)=(d(s,v_1),d(s,v_2),
d(s,v_3),...,d(s,v_k))$. The set $A$ is a resolving set if every two different vertices have a distinct representation with respect to the set $A$.
   \end{defn}   
    \begin{defn}\cite{ha} 
  A resolving set with minimum cardinality is called the metric basis for $G$ and its cardinality is termed as the MD of $G$. It is usually denoted by $dim(G).$ 
 \end{defn}  
 \begin{defn} \cite{her}
    A resolving set $S$ of a graph $G$ is considered as a FTRS if removing any vertex from $S$ still results in a resolving set. The cardinality of the minimum FTRS is known as the FTMD, denoted by $fdim(G)$.
 \end{defn}
\noindent\begin{prop} \cite{her}
A graph $G=(V, E)$ has FTMD two $\iff$ $G$ is a path.
\end{prop}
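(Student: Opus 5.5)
We prove the two implications separately. Throughout, $G$ is connected with at least two vertices, and a fault-tolerant resolving set (FTRS) always has at least two elements. Indeed, a single vertex $\{v\}$ is not fault-tolerant, since deleting $v$ leaves the empty set, which resolves nothing when $|V(G)|\ge 2$.

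\emph{Paths have FTMD two.} Let $G=P_n$ with $n\ge 2$, and let $u,w$ be its two end vertices. Each end vertex resolves the path on its own. If $u$ is an end vertex, then $d(u,\cdot)$ takes the distinct values $0,1,\ldots,n-1$ along the path, so $\{u\}$ is a resolving set, and likewise $\{w\}$. Hence $\{u,w\}$ is resolving, and after removing either vertex the remaining singleton is still resolving. So $\{u,w\}$ is an FTRS, and combined with the lower bound $fdim(G)\ge 2$ we get $fdim(P_n)=2$.

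\emph{FTMD two forces a path.} Suppose $S=\{a,b\}$ is an FTRS of $G$. By the definition of an FTRS, both $S\setminus\{b\}=\{a\}$ and $\{b\}$ are resolving sets, so the single vertex $a$ resolves $G$. This means the map $x\mapsto d(a,x)$ is injective on $V(G)$.

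The main step is to show that a graph resolved by one vertex $a$ is a path with end $a$. Since $G$ is connected, every distance value $0,1,\ldots,e(a)$ occurs, where $e(a)$ is the eccentricity of $a$. Injectivity then says that each distance layer $L_i=\{x: d(a,x)=i\}$ contains exactly one vertex $x_i$. Any edge of $G$ joins two vertices whose distances from $a$ differ by at most $1$, and no edge lies inside a layer because each layer is a single vertex. So the only possible edges are $x_ix_{i+1}$. Every $x_{i+1}$ must have a neighbour in $L_i$, which forces all the edges $x_ix_{i+1}$ to be present. Therefore $G$ is exactly the path $x_0x_1\cdots x_{e(a)}$.

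There is no real obstacle here. The one point to state carefully is the lower bound $fdim(G)\ge 2$, which relies on $|V(G)|\ge 2$; the trivial one-vertex graph is the degenerate case and should be excluded explicitly.
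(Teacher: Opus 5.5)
Your proof is correct and complete. The paper does not prove this proposition; it quotes it from Hernando et al.\ without proof.

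Your argument is the standard one behind that result. A two-element fault-tolerant resolving set $\{a,b\}$ forces both $\{a\}$ and $\{b\}$ to be resolving, and a graph resolved by a single vertex is a path. You prove that last fact directly via the singleton distance layers, rather than citing the classical characterization $\dim(G)=1 \iff G$ is a path.

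Your caveat is also right. As literally stated, the proposition fails for the one-vertex path $P_1$, so it must be read for connected graphs on at least two vertices, which is exactly what you assume.
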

\begin{prop} \cite{77}\label{ET}
Let $p$ and $q$ be two distinct primes, where $q > p$. Suppose $R=\mathbb{Z}_n$, where $n=pq$, then the following holds:
 \begin{enumerate}
    \item  dim($BS(\Gamma(R))
)=q-2$ if $q\geq 2p-1.$
 \item  dim($BS(\Gamma(R)) > q-2$ if  $p+1 < q < 2p-3.$
  \item  dim($BS(\Gamma(R))=q-1$, if  $q = 2p-3.$
\end{enumerate} 
 \end{prop}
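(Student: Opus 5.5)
The plan is to make the structure of $BS(\Gamma(\mathbb{Z}_{pq}))$ explicit and then reduce the resolving condition to a combinatorial condition on a small auxiliary bipartite graph. Since $\Gamma(\mathbb{Z}_{pq})\cong K_{p-1,q-1}$, take parts $A=\{q,2q,\dots,(p-1)q\}$ of size $p-1$ and $B=\{p,2p,\dots,(q-1)p\}$ of size $q-1$. Write $s_{ab}$ for the subdivision vertex on the edge $ab$.

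\textbf{Distance table.} The distances are as follows:
\begin{itemize}
\item $d(a,a')=4$ for distinct $a,a'\in A$, and likewise $d(b,b')=4$ in $B$; also $d(a,b)=2$.
\item $d(a,s_{ab})=1$, and $d(a,s_{a'b})=3$ for $a'\neq a$; symmetrically for vertices of $B$.
\item $d(s_{ab},s_{a'b'})$ equals $2$ if the two edges share exactly one endpoint and $4$ if they are disjoint.
\end{itemize}
From this table, a vertex $v$ resolves $b,b'\in B$ only if $v\in\{b,b'\}$ or $v=s_{ab}$ or $v=s_{ab'}$ for some $a$. Say such a $v$ \emph{touches} $b$ or $b'$; every vertex touches at most one element of $B$ and at most one element of $A$.

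\textbf{Lower bound (part 1).} If two elements of $B$ are untouched by a resolving set $W$, they are not resolved. Hence $W$ touches at least $q-2$ elements of $B$, so $|W|\ge q-2$. This gives $\dim\ge q-2$ for all $q>p$.

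\textbf{Encoding landmarks as a graph.} First I will argue that landmarks in $A\cup B$ can be traded for subdivision vertices without loss. Then a candidate set $W$ of subdivision vertices is an edge set $H\subseteq E(K_{p-1,q-1})$. In these terms:
\begin{itemize}
\item The code of $a\in A$ records exactly which $H$-edges are incident with $a$ (value $1$) and which are not (value $3$), and similarly for $b\in B$.
\item The code of $s_{ab}$ records which $H$-edges equal $ab$ (value $0$), which meet $\{a,b\}$ in exactly one vertex (value $2$), and which are disjoint from it (value $4$).
\end{itemize}
Codes of original vertices use odd entries and codes of subdivision vertices use even entries, so the two kinds never collide. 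Resolvability then reduces to three conditions on $H$:
\begin{enumerate}[(i)]
\item at most one vertex of $A$, and at most one of $B$, is isolated in $H$;
\item distinct edges $ab\neq a'b'$ have distinct ``closed edge-neighbourhoods'' $\{e\in H: e\cap\{a,b\}\neq\emptyset\}$, with membership $e=ab$ recorded separately.
\end{enumerate}

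\textbf{Upper bound (part 1) for $q\ge 2p-1$.} I will build $H$ with $q-2$ edges as follows. Each $a_i$ gets two private leaves $b_{2i-1},b_{2i}$, which uses $2(p-1)\le q-1$ vertices of $B$. Any remaining budget is spent on further leaves, leaving exactly one $b$ isolated. I then check (ii) case by case, according to whether $b$ has $H$-degree $1$ or $0$ and whether $ab\in H$. The point of giving each $a$ degree $\ge 2$ is this: $s_{ab}$ with $b$ untouched is identified by the star of $a$ alone, and that star must differ from the one obtained from a non-edge $s_{ab'}$ with $b'$ a leaf of some other $a'$.

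\textbf{Parts 2 and 3.} These are the main obstacle. Here one must show that when $q<2p-1$, no $H$ with $q-2$ edges satisfies (ii), and that when $q=2p-3$, a set of size $q-1$ does. My first attempt will be a counting argument. With $|H|=q-2$ and $q-2$ touched elements of $B$, every touched $b$ is a leaf. I will then show that two vertices of $A$ with $H$-degree $\le 1$ produce a collision: for instance, $s_{ab}$ versus $a'$-type configurations, or $s_{ab^*}$ versus $s_{a'b^*}$ with $b^*$ isolated. This forces $\sum_a \deg_H(a)\ge 2(p-2)+1$, which gives $q-2\ge 2p-3$; pushing the parity/equality case further gives the strict bound $q\ge 2p-1$.

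For part 3 I will exhibit an explicit $H$ with $q-1=2p-4$ edges, modifying the part-1 construction by letting one $b$ have degree $2$. The delicate step is the exhaustive collision check in (ii) for low-degree vertices of $A$, and I expect most of the work to lie there.
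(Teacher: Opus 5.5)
\textbf{Gap: the boundary case $q=2p-1$ of part 1.} Your construction fails when $q=2p-1$, which part 1 includes and which the paper's Lemma 3.1 and Theorem 3.2 use.

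Giving every vertex of $A$ two private leaves costs $2(p-1)$ landmarks. You checked that $2(p-1)\le q-1$ vertices of $B$ are available, but the landmark budget is $q-2$. Since $2(p-1)\le q-2$ only when $q\ge 2p$, at $q=2p-1$ your set has $q-1$ elements.

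This cannot be fixed within the subdivision-only framework, because your claim that landmarks in $A\cup B$ can be traded for subdivision vertices without loss is false. The cause is a condition missing from your list (you announce three conditions but state two). Let $E_H(x)$ be the set of $H$-edges at $x$. If all landmarks are subdivision vertices, then $a\in A$ and $b\in B$ get the same code exactly when $E_H(a)=E_H(b)$. That happens when both are isolated in $H$, or when $ab$ is the only $H$-edge at each. So (i) must say that at most one vertex of $A\cup B$ is isolated and that $H$ has no single-edge component.

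Now take $q=2p-1$. By your own counting, $2p-3$ subdivision landmarks touch $2p-3$ of the $2p-2$ vertices of $B$ once each and leave one $b^*$ untouched. Hence some $a$ has $H$-degree at most one. It then collides with $b^*$ if its degree is $0$, or with its unique leaf if its degree is $1$. So every metric basis for $q=2p-1$ contains a vertex of $B$. This is the vertex $r_2\in W$ in the paper's Lemma 3.1, whose $A$ is your $B$.

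\textbf{Repair for $q=2p-1$.} Take $W=\{b_0\}\cup H$, where $p-2$ vertices of $A$ get two private leaves each, one vertex $a^*\in A$ gets none, and one $b^*\in B$ is untouched. Then $|W|=1+2(p-2)=q-2$. The landmark $b_0$ separates $A$ from $B$ (distance $2$ versus $0$ or $4$). Every other collision needs two vertices of $A$ of $H$-degree at most one, and there are none here.

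\textbf{Lower bound for parts 2 and 3.} For the same reason, you must handle original landmarks directly rather than trading them away.
\begin{itemize}
\item A landmark in $A$ touches no vertex of $B$, so it forces $|W|\ge q-1$.
\item If $W$ contains $k\ge 1$ vertices of $B$, at most one vertex of $A$ can have $H$-degree at most one. This gives $q-2-k\ge 2(p-2)$, hence $q\ge 2p-1$.
\item If $W$ consists only of subdivision vertices, every $a$ needs degree at least two, hence $q\ge 2p$.
\end{itemize}
Your step ``$q-2\ge 2p-3$, then push to the strict bound $q\ge 2p-1$'' is muddled, since the first inequality already says $q\ge 2p-1$.

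\textbf{Part 3.} Your idea does work. Let two vertices of $A$ share one vertex of $B$, give the other $p-3$ vertices of $A$ two private leaves each, and leave one $b$ untouched. This gives a resolving set of $2(p-3)+2=q-1$ subdivision vertices.
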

 \noindent According to the definition of the FTMD of a graph \( G \), the following inequality is implied:
\begin{equation}\label{a}
fdim(\Gamma(R)\geq dim(\Gamma(R))+1.
\end{equation}
This inequality indicates that the FTMD is always at least one unit greater than the MD, reflecting the added complexity of maintaining resolvability in the face of potential vertex failures. This relationship highlights the enhanced requirements for a basis set that can withstand the removal of vertices while still effectively allowing for the identification of all other vertices in the graph.
\section{ Fault-Tolerant Metric Dimension of Barycentric Subdivision of \texorpdfstring{$\Gamma (\mathbb{Z}_n$)}{Gamma(Zn)}}
In this section, we will determine the FTMD of $BS(\Gamma(\mathbb{Z}_n))$, where $n = pq$, and $p$ and $q$ are distinct odd primes with $q>p$. \\

% \begin{rem} \label{last} \quad \\
%     \begin{itemize}
%         \item As shown in Figure \ref{fig:4}, for \( p \geq 3 \), the set \( U \) contains \( p-1 \) vertices. Among these vertices, exactly \( \frac{p-1}{2} \) are adjacent only to vertices in the set \( S \) and are not adjacent to any vertices in other sets. Similarly, exactly \( \frac{p-1}{2} \) vertices are adjacent only to vertices in the set \( T \) and are not adjacent to any vertices in other sets. 
%     \item From the Figure. \ref{fig:4}, it is evident that if $d(u_i,a)\neq 1$, and  $d(u_j,a)\neq 1$, where $1\leq i, j \leq p-1$, $i\neq j$, and $a\in V(BS(\Gamma(\mathbb{Z}_{pq})))\backslash\{u_i,u_j\}$, then $d(u_i,a)=d(u_j,a)$.\\     
% \end{itemize}
% \end{rem}
\begin{theorem}\label{b}
Let $p$ and $q$ be two distinct odd primes, where $q > 2p-1$. Suppose $R=\mathbb{Z}_n$, where $n=pq$, then
  $fdim(BS(\Gamma(R))
)=q-1$.
 \end{theorem}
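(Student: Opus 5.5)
By Proposition~\ref{ET}(1), the hypothesis $q>2p-1$ gives $dim(BS(\Gamma(R)))=q-2$. Inequality \eqref{a} then yields the lower bound $fdim(BS(\Gamma(R)))\geq q-1$. So the work lies in the upper bound: I plan to exhibit a fault-tolerant resolving set of size $q-1$.

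First I will fix the structure. For $n=pq$, $\Gamma(\mathbb{Z}_n)\cong K_{p-1,q-1}$, with parts $U=\{u_1,\dots,u_{p-1}\}$ (the nonzero multiples of $q$) and $V=\{v_1,\dots,v_{q-1}\}$ (the nonzero multiples of $p$). In $BS(\Gamma(R))$ each edge $u_iv_j$ is replaced by a path $u_i w_{ij} v_j$. The distances are then:
\begin{itemize}
\item $d(u_i,u_k)=d(v_j,v_l)=4$ and $d(u_i,v_j)=2$;
\item $d(w_{ij},u_i)=d(w_{ij},v_j)=1$, while $d(w_{ij},u_k)=d(w_{ij},v_l)=3$ for $k\neq i$, $l\neq j$;
\item $d(w_{ij},w_{kl})=2$ if $i=k$ or $j=l$, and $4$ otherwise.
\end{itemize}
The graph is bipartite with the classes $U\cup V$ and $W=\{w_{ij}\}$. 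Hence any landmark chosen in $W$ automatically separates a vertex of $U\cup V$ from a vertex of $W$ by parity of distance.

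For the construction I will take the landmarks to be one subdivision vertex per column, $S=\{w_{r(j),j}: 1\leq j\leq q-1\}$, so $|S|=q-1$. The row assignment $r$ is chosen so that every row $i\leq p-2$ is used at least twice, and row $p-1$ is either unused or also used at least twice. This needs only $q-1\geq 2(p-2)$, which holds comfortably. I would spread the surplus landmarks so that each row receives at least two, preferably three where possible.

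Next I will verify that $S\setminus\{s\}$ resolves for every $s\in S$, class by class.
\begin{itemize}
\item \emph{$v_j$ versus $v_l$.} At most one of the two column landmarks is deleted, and the surviving one is at distance $1$ from exactly one of $v_j,v_l$.
\item \emph{$u_i$ versus $u_k$.} The code of $u_i$ has $1$ exactly at the landmarks in row $i$ and $3$ elsewhere. Since every used row has at least two landmarks and at most one row is unused, a single deletion cannot make two such row patterns coincide.
\item \emph{$u_i$ versus $v_j$.} The vertex $v_j$ has at most one entry equal to $1$, while $u_i$ has either at least one $1$ surviving in a different column, or no $1$ at all. I will handle this by comparing supports.
\end{itemize}

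The main obstacle is separating pairs $w_{ij},w_{kl}$ inside $W$. The code of $w_{ij}$ is $2$ at the landmarks in row $i$ or column $j$ (and $0$ if $w_{ij}\in S$), and $4$ elsewhere. When $j\neq l$ and the unique column-$j$ landmark is deleted, the column information is lost, and the pair must be separated by the row structure together with the column-$l$ landmark. I will do a careful case analysis on whether $i=k$ and on whether $r(j)$ or $r(l)$ equals $i$ or $k$, using the guaranteed multiplicity of rows.

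If some configuration fails, for instance $w_{ij}$ with $i=r(l)$ against $w_{kl}$, I will first try to repair the assignment $r$, for example by avoiding unused rows. Failing that, I will replace one landmark by a vertex $v_j$ or $u_i$ so that row and column incidences are encoded redundantly, while keeping the total at $q-1$. The slack provided by $q>2p-1$ should be what makes such a choice possible.
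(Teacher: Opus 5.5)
Your lower bound is the paper's own (Proposition~\ref{ET}(1) with \eqref{a}) and is fine. The gap is in the upper bound, and it lies in the $U$-versus-$V$ pairs, not where you expected.

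Let $R_i=r^{-1}(i)$. For a landmark $w_{ab}$ we have $d(u_i,w_{ab})=1$ iff $a=i$, and $d(v_j,w_{ab})=1$ iff $b=j$; all other such distances are $3$. So if $r(j)=i$, the pair $u_i,v_j$ is resolved by exactly the $|R_i|-1$ landmarks in $R_i\setminus\{j\}$.
\begin{itemize}
\item If $R_i=\{j,j'\}$, deleting $w_{ij'}$ gives $u_i$ and $v_j$ the same code. Your support comparison misses that the one surviving $1$ of $u_i$ can sit exactly in column $j$.
\item An unused row $p-1$ is worse: $u_{p-1}$ and $v_j$ are then separated only by the column-$j$ landmark.
\end{itemize}
Hence your set is fault-tolerant only if $|R_i|\ge 3$ for every row, i.e.\ $q-1\ge 3(p-1)$. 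This fails e.g.\ for $p=5,q=11$ or $p=7,q=17$.

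Re-choosing $r$ cannot help. Suppose $E\subseteq W$ with $|E|=q-1$. The pair $v_j,v_l$ is resolved only by landmarks in columns $j$ and $l$. An empty column would therefore force two landmarks in each of the other $q-2$ columns, which is too many. So there is exactly one landmark per column, and then the count above forces three per row. Consequently, for $2p+1\le q\le 3p-4$ no fault-tolerant resolving set of size $q-1$ lies inside $W$. The $W$-versus-$W$ pairs you flagged are in fact harmless once each row carries at least two landmarks; e.g.\ $w_{ij}$ versus $w_{kl}$ with $r(l)=i$, $r(j)=k$ has $|R_i|+|R_k|-2\ge 2$ resolvers.

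The repair you list only as a last resort is necessary, and it must use a vertex of $V$, not of $U$. The paper takes $E=\{v_1\}\cup\{w_{r(j),j}:2\le j\le q-1\}$; its $r_1$ is your $v_1$. The rows receive columns $\{2,3\},\{4,5\},\dots$, and the last row receives columns $2p-2,\dots,q-1$, at least three of them since $q\ge 2p+1$.
\begin{itemize}
\item Since $d(v_1,u_i)=2$ while $d(v_1,v_j)=4$ for $j\ne 1$, the landmark $v_1$ is the extra resolver for every pair $u_i,v_j$.
\item The pair $v_1,v_j$ is separated by $v_1$ itself and by the column-$j$ landmark.
\end{itemize}
Thus two landmarks per row suffice, which needs only $q-2\ge 2(p-1)$. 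A $u$-vertex cannot play this role: column $1$ would then be empty, and $v_1,v_j$ would have only one resolver. Your proof goes through once you switch to this set and redo the pair analysis, in particular $u_i$ versus $v_j$ and all pairs involving column $1$.
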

 \begin{proof}
 % \textbf{Case 1: for $p>2$ $\&$ $q> 2p-1$\\
\noindent Consider a ring $R =\mathbb{Z}_n$, where $n=pq$, and let $\Gamma(R)$ represent its zero divisor graph. We define $BS(\Gamma(R))$ as the barycentric subdivision of $\Gamma(R)$. The graph $BS(\Gamma(R))$ consists of $pq-1$ vertices and $2(p-1)(q-1)$ edges. In particular, We have partitioned the set $V(BS(\Gamma(R))$ into $4$ sets namely, $A=\{r_1,r_2,\ldots,r_{q-1}\}$, $S=\{s^\tau_i\}$, $T=\{t^\tau_i\}$, where $1\leq \tau\leq \frac{p-1}{2}$, and $1\leq i\leq {q-1}$ ($S^\tau=\{s^\tau_1,s^\tau_2,\ldots,s^\tau_{q-1}\}$, where $1\leq \tau \leq \frac{p-1}{2}$ are the subsets of $S$, and $T^\tau=\{t^\tau_1,t^\tau_2,\ldots,t^\tau_{q-1}\}$, where $1\leq \tau \leq \frac{p-1}{2}$  are the subsets of $T$),  and $U=\{u_1,u_2, \ldots, u_{p-1}\}$. These sets are illustrated in the Figure. \ref{fig:4}.
\begin{figure}[ht!]
    \includegraphics[width=14cm]{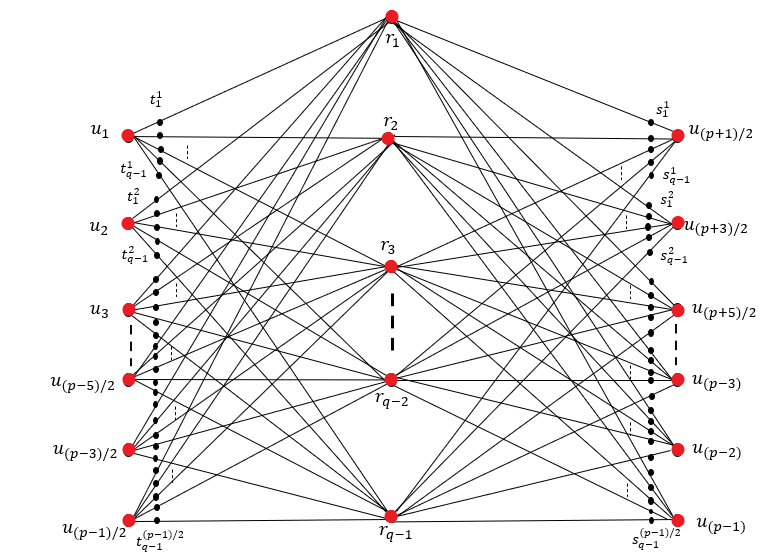}
    \caption{Barycentric Subdivision of Zero Divisor Graph $\mathbb{Z}_{pq}$}
    \label{fig:4}
\end{figure}
 
\noindent From Proposition \ref{ET}, we know that the MD of the barycentric subdivision of $\Gamma (R)$ is $q-2$, when $p>2$ $\&$ $q> 2p-1$. We now need to prove that its FTMD is $q-1$. We must demonstrate that $E$ is a minimum FTRS consisting of $q-1$ vertices, where $E=\{ r_1, t^1_2,t^1_3,t^2_4,t^2_5, \ldots,t^\frac{p-1}{2}_{p-1},t^\frac{p-1}{2}_{p}, s^1_{p+1}, s^1_{p+2}, \ldots, s^\frac{p-5}{2}_{2p-6}, s^\frac{p-5}{2}_{2p-5},s^\frac{p-3}{2}_{2p-4},
s^\frac{p-3}{2}_{2p-3}, s^\frac{p-1}{2}_{2p-2},\ldots, s^\frac{p-1}{2}_{q-1} \}$. \\

 The metric coordinates for the vertices $\{r_ \gamma:2\leq \noindent \gamma\leq q-1\}$ are given below
 \begin{equation*}
               \delta(r_{\gamma}|E)=
                   \begin{cases}
                   (4_{\{1\}},1_{\{ 2 \}},3_{\{3\}},\ldots,3_{\{q-3\}},3_{\{q-2\}}, 3_{\{q-3\}},3_{\{q-1\}}) &\textit{if}\quad \gamma= 2;\\
                  
                    (4_{\{1\}},3_{\{2\}},\ldots,3_{\{\gamma -1\}},1_{\{ \gamma \}},3_{\{\gamma +1\}},\ldots,3_{\{q-2\}},3_{\{q-1\}}) &\textit{if}\quad 3\leq \gamma $ $\leq q-3  ;\\
                    (4_{\{1\}},3_{\{2\}},\ldots,3_{\{q-3\}},1_{\{q-2\}}, 3_{\{q-1\}}) &\textit{if}\quad \gamma= q-2;\\
                  
                    (4_{\{1\}},3_{\{2\}},3_{\{3\}},\ldots,3_{\{q-3\}},3_{\{q-2\}}, 1_{\{q-1\}} ) &\textit{if}\quad \gamma= q-1.\\
                \end{cases}
                     \end{equation*}
\noindent The metric coordinates for the vertices 
                     $\{t^1_\gamma: 1\leq \gamma\leq q-1 \}$ are given below
                        \begin{equation*}
                        \delta(t^{1}_{\gamma}|E)=
                   \begin{cases}
                   (1_{\{1\}},2_{\{2\}},2_{\{3\}},4_{\{4\}},\ldots,4_{\{q-2\}}, 4_{\{q-1\}})&\textit{if}\quad\gamma= 1;\\
                     (3_{\{1\}},2_{\{2\}}, 2_{\{3 \}},2_{\{ 4 \}}, 4_{\{ 5 \}},\ldots,4_{\{q-2\}}, 4_{\{q-1\}})&\textit{if}\quad  \gamma= 4;\\
                     (3_{\{1\}},2_{\{2\}}, 2_{\{3 \}},4_{\{ 4 \}}, \ldots, 4_{\{\gamma-1 \}},2_{\{\gamma \}}, 4_{\{\gamma+1 \}},\ldots,4_{\{q-2\}}, 4_{\{q-1\}})&\textit{if}\quad 5 \leq  \gamma  \leq q-3  ;\\
                     (3_{\{1\}},2_{\{2\}}, 2_{\{3 \}},4_{\{ 4 \}},\ldots, 4_{\{q-3\}},2_{\{q-2\}}, 4_{\{q-1\}})&\textit{if}\quad \gamma= q-2;\\
                    (3_{\{1\}},2_{\{2\}},,2_{\{3\}}, 4_{\{4\}},\ldots,4_{\{q-2\}}, 2_{\{q-1\}}) &\textit{if}\quad\gamma= q-1.\\\\
                    
                    \end{cases}
                    \end{equation*}

                    \noindent The metric coordinates for the vertices 
                     $\{t^2_\gamma: 1\leq \gamma\leq q-1 \}$ are given below
                        \begin{equation*}
                        \delta(t^{2}_{\gamma}|E)=
                   \begin{cases}
                   (1_{\{1\}},4_{\{2\}},4_{\{3\}},2_{\{4\}}, 2_{\{5\}}, 4_{\{6\}}, \ldots,4_{\{q-2\}}, 4_{\{q-1\}})&\textit{if}\quad\gamma= 1;\\
                   (3_{\{1\}},2_{\{2\}},4_{\{3\}},2_{\{4\}}, 2_{\{5\}}, 4_{\{6\}},\ldots,4_{\{q-2\}}, 4_{\{q-1\}})&\textit{if}\quad\gamma= 2;\\
                     (3_{\{1\}},4_{\{2\}},2_{\{3\}},2_{\{4\}}, 2_{\{5\}}, 4_{\{6\}},\ldots,4_{\{q-2\}}, 4_{\{q-1\}})&\textit{if}\quad\gamma= 3;\\
                     (3_{\{1\}},4_{\{2\}},4_{\{3\}},2_{\{4\}}, 2_{\{5\}}, 2_{\{6\}}, 4_{\{7\}}, \ldots,4_{\{q-2\}}, 4_{\{q-1\}})&\textit{if}\quad\gamma= 6;\\
                     (3_{\{1\}},4_{\{2\}}, 4_{\{3 \}},2_{\{ 4 \}}, 2_{\{ 5 \}}, 4_{\{ 6 \}}, \ldots, 4_{\{\gamma-1 \}},2_{\{\gamma \}}, 4_{\{\gamma+1 \}},\ldots,4_{\{q-2\}}, 4_{\{q-1\}})&\textit{if}\quad 7 \leq  \gamma  \leq q-3  ;\\
                      (3_{\{1\}},4_{\{2\}},4_{\{3\}},2_{\{4\}}, 2_{\{5\}}, 4_{\{6\}}, \ldots, 4_{\{q-3\}} ,2_{\{q-2\}}, 4_{\{q-1\}})&\textit{if}\quad\gamma= q-2;\\
                    (3_{\{1\}},4_{\{2\}},4_{\{3\}},2_{\{4\}}, 2_{\{5\}}, 4_{\{6\}},\ldots,4_{\{q-3\}} ,4_{\{q-2\}}, 2_{\{q-1\}})&\textit{if}\quad\gamma= q-1.\\
                    \end{cases}
                    \end{equation*}
                 \noindent The metric coordinates for the vertices 
                     $\{t^\tau_\gamma:3\leq \tau\leq \frac{p-1}{2}$  $\&$  $1\leq \gamma\leq q-1 \}$ are given below
                        \begin{equation*}
                        \delta(t^{\tau}_{\gamma}|E)=
                   \begin{cases}
                   (1_{\{1\}},4_{\{2\}},\ldots, 4_{\{2\tau-1\}},2_{\{2\tau\}},2_{\{2\tau+1\}},4_{\{2\tau+2\}},\ldots,4_{\{q-2\}}, 4_{\{q-1\}})&\\
                   \qquad \qquad \qquad\qquad\qquad\qquad\qquad\qquad\qquad\qquad\qquad\qquad\qquad\qquad\qquad \qquad \quad\textit{if}\quad\gamma= 1;\\
                     (3_{\{1\}},2_{\{ 2 \}},4_{\{3\}}, \ldots, 4_{\{2\tau-1\}},2_{\{2\tau\}},2_{\{2\tau+1\}},4_{\{2\tau+2\}},\ldots,4_{\{q-2\}}, 4_{\{q-1\}})&\\
                   \qquad \qquad \qquad\qquad\qquad\qquad\qquad\qquad\qquad\qquad\qquad\qquad\qquad\qquad\qquad \qquad \quad\textit{if}\quad \gamma= 2;\\
                   (3_{\{1\}},4_{\{2\}}, \ldots,4_{\{\gamma-1 \}},2_{\{ \gamma \}},4_{\{\gamma +1\}}, \ldots, 4_{\{2\tau-1\}},2_{\{2\tau\}},2_{\{2\tau+1\}},4_{\{2\tau+2\}},\ldots,4_{\{q-2\}}, 4_{\{q-1\}})&\\
                   \qquad \qquad \qquad\qquad\qquad\qquad\qquad\qquad\qquad\qquad\qquad\qquad\qquad\qquad\qquad \qquad \quad\textit{if}\quad  3 \leq  \gamma  \leq 2\tau-2  ;\\
                      (3_{\{1\}},4_{\{2\}}, \ldots,4_{\{2\tau-2 \}},2_{\{ 2\tau-1 \}},2_{\{2\tau\}},2_{\{2\tau+1\}},4_{\{2\tau+2\}},\ldots,4_{\{q-2\}}, 4_{\{q-1\}})&\\
                   \qquad \qquad \qquad\qquad\qquad\qquad\qquad\qquad\qquad\qquad\qquad\qquad\qquad\qquad\qquad \qquad \quad\textit{if}\quad\gamma= 2\tau-1 ;\\
                     (3_{\{1\}},4_{\{2\}}, \ldots,4_{\{2\tau*-+-
                     1\}},2_{\{2\tau\}},2_{\{2\tau+1\}},2_{\{ 2\tau+2 \}}, 4_{\{2\tau+3 \}},\ldots,4_{\{q-2\}}, 4_{\{q-1\}})&\\
                   \qquad \qquad \qquad\qquad\qquad\qquad\qquad\qquad\qquad\qquad\qquad\qquad\qquad\qquad\qquad \qquad \quad\textit{if}\quad  \gamma= 2\tau+2 ;\\
                    (3_{\{1\}},4_{\{2\}}, \ldots,4_{\{2\tau-1\}},2_{\{2\tau\}},2_{\{2\tau+1\}},4_{\{2\tau+2\}},\ldots,4_{\{\gamma-1 \}},2_{\{ \gamma \}},4_{\{\gamma +1\}},\ldots,4_{\{q-2\}}, 4_{\{q-1\}})&\\
                   \qquad \qquad \qquad\qquad\qquad\qquad\qquad\qquad\qquad\qquad\qquad\qquad\qquad\qquad\qquad \qquad \quad\textit{if}\quad  2\tau+3 \leq  \gamma  \leq q-3  ;\\
                     (3_{\{1\}},4_{\{2\}}, \ldots,4_{\{2\tau-1\}},2_{\{2\tau\}},2_{\{2\tau+1\}},4_{\{2\tau+2\}},\ldots,4_{\{q-3 \}},2_{\{ q-2 \}}, 4_{\{q-1\}})&\\
                   \qquad \qquad \qquad\qquad\qquad\qquad\qquad\qquad\qquad\qquad\qquad\qquad\qquad\qquad\qquad \qquad \quad\textit{if}\quad  \gamma=  q-2  ;\\
                    (3_{\{1\}},4_{\{2\}}, \ldots,4_{\{2\tau-1\}},2_{\{2\tau\}},2_{\{2\tau+1\}},4_{\{2\tau+2\}},\ldots,4_{\{q-2\}}, 2_{\{q-1\}}) &\\
                   \qquad \qquad \qquad\qquad\qquad\qquad\qquad\qquad\qquad\qquad\qquad\qquad\qquad\qquad\qquad \qquad \quad\textit{if}\quad\gamma= q-1.\\
                    \end{cases}
                    \end{equation*}
                    \noindent The metric coordinates for the vertices 
                     $\{u_\tau:1\leq \tau\leq {p-1} \}$ are given below
                        \begin{equation*}
                        \delta(u_{\tau}|E)=
                   \begin{cases}
                   (2_{\{1\}},1_{\{2\}}, 1_{\{3\}}, 3_{\{4\}},\ldots,3_{\{q-2\}}, 3_{\{q-1\}})&\textit{if}\quad  \tau=1 ;\\
                   (2_{\{1\}},3_{\{2\}}, \ldots, 3_{\{2\tau-1\}},1_{\{2\tau\}},1_{\{2\tau+1\}},3_{\{2\tau+2\}},\ldots,3_{\{q-2\}}, 3_{\{q-1\}})&\textit{if}\quad  2 \leq \tau  \leq p-2 ;\\
                     (2_{\{1\}},3_{\{2\}}, \ldots, 3_{\{2\tau-1\}},1_{\{2\tau\}},1_{\{2\tau+1\}},\ldots,1_{\{q-1\}})&\textit{if}\quad  \tau= p-1 .\\
                     
                     \end{cases}
                     \end{equation*}
\noindent The metric coordinates for the vertices 
                     $\{s^\tau_\gamma:1\leq \tau\leq \frac{p-3}{2} $ $ \& $  $1\leq \gamma\leq q-1 \}$ are given below
                        \begin{equation*}
                        \delta(s^{\tau}_{\gamma}|E)=
                   \begin{cases}
                   (1_{\{1\}},4_{\{2\}},\ldots,4_{\{2\tau+p-2\}},2_{\{p+2\tau-1\}},2_{\{p+2\tau\}},4_{\{p+2\tau+1\}},\ldots,4_{\{q-2\}}, 4_{\{q-1\}})&\\
                   \qquad \qquad \qquad\qquad\qquad\qquad\qquad\qquad\qquad\qquad\qquad\qquad\qquad\qquad\qquad \qquad \quad \textit{if}\quad\gamma= 1;\\
                    (3_{\{1\}},2_{\{2 \}},4_{\{3\}},\ldots,4_{\{2\tau+p-2\}},2_{\{p+2\tau-1\}},2_{\{p+2\tau\}},4_{\{p+2\tau+1\}},\ldots,4_{\{q-2\}}, 4_{\{q-1\}})&\\
                   \qquad \qquad \qquad\qquad\qquad\qquad\qquad\qquad\qquad\qquad\qquad\qquad\qquad\qquad\qquad \qquad \quad \textit{if}\quad   \gamma= 2;\\
                     
                   (3_{\{1\}},4_{\{2\}}, \ldots,4_{\{\gamma-1 \}},2_{\{ \gamma \}},4_{\{\gamma +1\}},\ldots,4_{\{2\tau+p-2\}},2_{\{p+2\tau-1\}},2_{\{p+2\tau\}},4_{\{p+2\tau+1\}},\ldots,4_{\{q-2\}}, 4_{\{q-1\}})&\\
                   \qquad \qquad \qquad\qquad\qquad\qquad\qquad\qquad\qquad\qquad\qquad\qquad\qquad\qquad\qquad \qquad \quad \textit{if} \quad  3 \leq  \gamma  \leq 2\tau+p-3 ;\\

                     (3_{\{1\}},4_{\{2\}}, \ldots,4_{\{\gamma-1 \}},2_{\{ 2\tau+p-2 \}},2_{\{p+2\tau-1\}},2_{\{p+2\tau\}},4_{\{p+2\tau+1\}},\ldots,4_{\{q-2\}}, 4_{\{q-1\}})&\\
                   \qquad \qquad \qquad\qquad\qquad\qquad\qquad\qquad\qquad\qquad\qquad\qquad\qquad\qquad\qquad \qquad \quad\textit{if}\quad \gamma = 2\tau+p-2 ;\\
                     (3_{\{1\}},4_{\{2\}}, \ldots,4_{\{2\tau+p-2\}},2_{\{p+2\tau-1\}},2_{\{p+2\tau\}},2_{\{p+2\tau+1\}}, 4_{\{p+2\tau+2\}}, \ldots,4_{\{q-2\}})&\\
                   \qquad \qquad \qquad\qquad\qquad\qquad\qquad\qquad\qquad\qquad\qquad\qquad\qquad\qquad\qquad \qquad \quad \textit{if}\quad\gamma= p+2\tau+1 ;\\
                    (3_{\{1\}},4_{\{2\}}, \ldots,4_{\{2\tau+p-2\}},2_{\{p+2\tau-1\}},2_{\{p+2\tau\}},4_{\{p+2\tau+1\}},\ldots,4_{\{\gamma-1 \}},2_{\{ \gamma \}},4_{\{\gamma +1\}},\ldots,4_{\{q-2\}}, 4_{\{q-1\}})&\\
                   \qquad \qquad \qquad\qquad\qquad\qquad\qquad\qquad\qquad\qquad\qquad\qquad\qquad\qquad\qquad \qquad \quad \textit{if} \quad p+2\tau+2 \leq  \gamma  \leq q-3  ;\\
                     (3_{\{1\}},4_{\{2\}}, \ldots,4_{\{2\tau+p-2\}},2_{\{p+2\tau-1\}},2_{\{p+2\tau\}},4_{\{p+2\tau+1\}},\ldots, 4_{\{q-3\}},2_{\{q-2\}}, 4_{\{q-1\}})&\\
                   \qquad \qquad \qquad\qquad\qquad\qquad\qquad\qquad\qquad\qquad\qquad\qquad\qquad\qquad\qquad \qquad \quad  \textit{if}\quad   \gamma = q-2  ;\\
                    (3_{\{1\}},4_{\{2\}}, \ldots,4_{\{2\tau+p-2\}},2_{\{p-1+2\tau\}},2_{\{p+2\tau\}},4_{\{p+2\tau+1\}},\ldots,4_{\{q-2\}}, 2_{\{q-1\}}) &\\
                   \qquad \qquad \qquad\qquad\qquad\qquad\qquad\qquad\qquad\qquad\qquad\qquad\qquad\qquad\qquad \qquad \quad \textit{if}\quad\gamma= q-1.\\
                      \end{cases}
                    \end{equation*}
                    \noindent The metric coordinates for the vertices 
                     $\{s^\tau_\gamma: \tau =\frac{p-1}{2} $ $\&$  $1\leq \gamma\leq q-1 \}$ are given below
                        \begin{equation*}
                        \delta(s^{\tau}_{\gamma}|E)=
                   \begin{cases}
                   (1_{\{1\}},4_{\{2\}},\ldots,4_{\{2p-3 \}},2_{\{2p-2 \}}, \ldots,2_{\{q-3\}},2_{\{q-2\}}, 2_{\{q-1\}})
                    &\textit{if}\quad\gamma= 1;\\
                    (3_{\{1\}},2_{\{2\}},4_{\{3\}}, \ldots, 4_{\{2p-3 \}},2_{\{2p-2 \}}, \ldots,2_{\{q-3\}},2_{\{q-2\}}, 2_{\{q-1\}}) 
                    &\textit{if}\quad\gamma= 2;\\ 
                   (3_{\{1\}},4_{\{2\}}, \ldots,4_{\{\gamma-1\}},2_{\{ \gamma\}},4_{\{\gamma +1\}}, \ldots, 4_{\{2p-3 \}},2_{\{2p-2 \}}, \ldots,2_{\{q-3\}},2_{\{q-2\}}, 2_{\{q-1\}})&\textit{if}\quad   3 \leq  \gamma  \leq 2p-4  ;\\
                      (3_{\{1\}},4_{\{2\}}, \ldots, 4_{\{2p-4 \}} 2_{\{2p-3 \}},2_{\{2p-2 \}}, \ldots,2_{\{q-3\}},2_{\{q-2\}}, 2_{\{q-1\}})&\textit{if}\quad   \gamma= 2p-3.\\
                   
                     \end{cases}
                    \end{equation*}

                    \noindent After analysing the above codes, it is evident that removing any vertex from the set \( E \) (i.e., \( E - \{a\} \), where \( a \in E \)) while ensuring that all vertices of $BS(\Gamma(R))$ have unique metric coordinates with respect to \( E - \{a\} \) preserves the property of the fault-tolerance in \( E \). This implies that \( E \) is indeed an FTRS, leading to the conclusion that \( fdim(\Gamma(R)) \leq q - 1 \). Furthermore, based on Proposition \ref{ET} and Equation \ref{a}, it is clear that the FTMD of $\ BS(\Gamma(R) )$ is at least \( q - 1 \). Therefore, we can conclude that \( E \) is a minimum FTRS, i.e., \( fdim(BS\Gamma(R)) = q - 1 \). This concludes the proof of the above theorem. \\

\end{proof}
\begin{rem}\label{22} \cite{77}
If $d(r_i,x)\neq 1$, and $d(r_j,x) \neq 1$, where $1\leq i, j \leq q-1$, $i\neq j$, and $x\in V(BS(\Gamma(\mathbb{Z}_{pq})))\backslash\{r_i,r_j\}$, then $d(r_i,x)=d(r_j,x)$.     
 \end{rem}
\begin{lem}\label{lemma1} Let $BS(\Gamma(\mathbb{Z}_{pq}))$ be a 
 graph, where $p$ and $q$ be two distinct odd primes, and $q=2p-1$. And the vertex set $V(BS(\Gamma(\mathbb{Z}_{pq}))$ be partitioned into the subsets
      $A=\{r_1,r_2,\ldots,r_{q-1}\}$,
      \( S = \bigcup \limits_{\tau=1}^{\frac{p-1}{2}} S^\tau \),
      \( T = \bigcup \limits_{\tau=1}^{\frac{p-1}{2}} T^\tau \), and
      $U=\{u_1,u_2, \ldots, u_{p-1}\}$.  
     Let \( W \)  be any metric basis of the graph \( BS(\Gamma(Z_{pq})) \). Then  
\begin{enumerate}
\item The set $W$ is independent and $W\cap U=\emptyset$.

    \item Each set $S^k$, $T^k$ (where $1\leq k \leq \frac{p-1}{2}$) contains exactly two vertices of $W$, except exactly one set.
    \item There exists $r_1~r_2\in A$, such that $N[r_1]\cap W=\emptyset$, $r_2\in W$, and $|N[r_i]\cap W|=1$, $\forall ~r_i\in A\setminus \{r_1\}$, where $[A\setminus\{ r_2\}]\cap W=\emptyset$
    \end{enumerate}
\end{lem}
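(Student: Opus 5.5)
Throughout I would use the explicit structure of $BS(\Gamma(\mathbb{Z}_{pq}))$. Since $\Gamma(\mathbb{Z}_{pq})\cong K_{p-1,q-1}$, with $U$ the $p-1$ multiples of $q$ and $A$ the $q-1$ multiples of $p$, the subdivision vertices split into $p-1$ \emph{groups} $G_1,\dots,G_{p-1}$. These groups are the sets $S^\tau,T^\tau$, and $G_k=N(u_k)$, so $|G_k|=q-1$. Each group contains exactly one vertex $x_{k,i}$ adjacent to $r_i$; I call $C_i=\{r_i\}\cup\{x_{k,i}:1\le k\le p-1\}$ the $i$-th \emph{column}. All distances then take only a few values:
\[
d(u_k,u_l)=d(r_i,r_j)=4,\qquad d(u_k,r_i)=2 ,
\]
\[
d(u_k,x_{l,i}),\ d(r_i,x_{l,j})\in\{1,3\},\qquad d(x_{k,i},x_{l,j})\in\{2,4\}.
\]
Here $d(x_{k,i},x_{l,j})=2$ exactly when $k=l$ or $i=j$. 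Remark \ref{22} is the column version of this observation. From Proposition \ref{ET} with $q=2p-1$ we know $|W|=q-2=2p-3$. The key numerology is that there are $p-1$ groups and $q-1=2(p-1)$ columns.

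First I would translate resolvability into covering conditions. A vertex of $W$ \emph{hits} group $k$ if it lies in $G_k\cup\{u_k\}$, and it hits column $i$ if it lies in $C_i$. Three facts follow from the distance table:
\begin{enumerate}
\item $u_k,u_l$ are resolved only by a vertex hitting group $k$ or group $l$, so at most one group is unhit.
\item $r_i,r_j$ are resolved only by a vertex hitting column $i$ or column $j$, so at most one column is unhit.
\item $x_{k,i},x_{k,j}$ (respectively $x_{k,i},x_{l,i}$) are resolved only by a vertex hitting column $i$ or $j$ (respectively group $k$ or $l$).
\end{enumerate}
A subdivision vertex hits one group and one column, whereas a vertex of $U$ (resp. $A$) hits only one group (resp. one column). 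Since there are $2p-2$ columns, at least $2p-3=|W|$ of them must be hit. Hence every vertex of $W$ hits a distinct column, no vertex of $W$ lies in $U$, and exactly one column, say column $1$, is unhit.

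Next comes the finer separation step, which gives part 2. Here I would compare $x_{k,i}$ with $x_{l,j}$ for $k\ne l$, $i\ne j$, and also with $u$'s and $r$'s. A subdivision vertex has code entries in $\{1,2,3,4\}$ according to whether the $W$-vertex is itself, adjacent to it via $u_k$ or $r_i$, and so on. Suppose a group $G_k$ contained only one $W$-vertex $x_{k,h}$. Then I would try to exhibit two vertices with identical codes, for instance $x_{k,1}$ (in the unhit column) against $u_k$ or against $r_h$'s neighbours. This would force every group but one to contain at least two $W$-vertices. The leftover count is $2p-3=2(p-2)+1$, so exactly one group has an exceptional number of $W$-vertices, and this yields part 2.

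Then I would handle parts 3 and 1. The exceptional group receives one vertex, and I expect the last column-hitting vertex to be an $r$ rather than a subdivision vertex. The reason is that otherwise two vertices of the unhit column $C_1$ would share codes. This gives the vertex "$r_2$" of part 3 as the unique element of $A\cap W$, and the unhit column gives $r_1$ with $N[r_1]\cap W=\emptyset$. Part 3's condition $|N[r_i]\cap W|=1$ then just restates that every other column is hit exactly once. Independence in part 1 follows from two observations. First, two $W$-vertices in the same column are excluded by the distinct-column count. Second, an edge $r_i x_{k,i}$ inside $W$ would put two vertices in one column. The main obstacle is the middle step, the case analysis producing two vertices with equal codes when a group has only one $W$-vertex. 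I would attack it first by checking the small case $p=3$, $q=5$ by hand to find the right colliding pair.
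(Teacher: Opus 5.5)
Your column count is correct. It already gives part 1 and the column half of part 3: $W\cap U=\emptyset$, the vertices of $W$ lie in distinct columns, exactly one column $C_1$ is unhit, and $W$ is independent.

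The gap is the step you defer, and the hints you give for it would not work.
\begin{enumerate}
\item The pair $x_{k,1},u_k$ can never collide. $BS(\Gamma(\mathbb{Z}_{pq}))$ is bipartite with $U\cup A$ on one side, so every landmark is at distances of different parity from them.
\item The pair $x_{k,1},x_{l,h}$ collides only when $G_l$ is also unhit.
\item More seriously, a group with one landmark is in general invisible to pairs of subdivision vertices and to pairs in $U$. Take $G_k\cap W=\{x_{k,h}\}$, two landmarks in every other group, all in distinct columns, and $W\cap A=\emptyset$. This set has $2p-3$ vertices, and the only unresolved pair is $u_k,r_h$.
\item Likewise, $2p-3=2(p-2)+1$ does not exclude one empty group $G_{k_0}$, one group with three landmarks, the rest with two, and $W\cap A=\emptyset$. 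There the only unresolved pair is $u_{k_0},r_1$.
\item Your reason for an $A$-vertex in $W$, a collision inside $C_1$, is false. $r_1$ is separated from each $x_{m,1}$ by parity, and $x_{m,1},x_{l,1}$ are separated by any landmark in $G_m\cup G_l$.
\end{enumerate}

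The missing ingredient is the $U$--$A$ pairs. This is also how the paper closes its Case 2, via $\delta(q_u|W)=\delta(r_v|W)$. A landmark $x_{m,n}$ separates $u_k$ from $r_i$ only if exactly one of $m=k$, $n=i$ holds, whereas every landmark in $A$ separates them.

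Suppose $W\cap A=\emptyset$ and $G_k$ holds at most one landmark $x_{k,h}$ (or none, with $h=1$). Then $\delta(u_k|W)=\delta(r_h|W)$. So all $p-1$ groups would need two landmarks, $2p-2>|W|$ in total, and hence $|W\cap A|\ge 1$.

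Next, two deficient groups $G_k,G_l$ always collide, as in the paper's Case 2:
\begin{enumerate}
\item if both are empty, use $u_k,u_l$;
\item if $G_k$ is empty and $G_l\cap W=\{x_{l,h}\}$, use $x_{l,1},x_{k,h}$;
\item if they hold only $x_{k,h}$ and $x_{l,h'}$, use $x_{k,h'},x_{l,h}$.
\end{enumerate}
Hence $2(p-2)+|W\cap A|\le 2p-3$. This forces $|W\cap A|=1$ and exactly two landmarks in each of $p-2$ groups. It also forces \emph{no} landmark in the exceptional set, not one as you suggest.
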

 \begin{proof}
 \noindent Consider a ring $\mathbb{Z}_{pq}$, where $q = 2p-1$, and let $\Gamma(Z_{pq})$ represent its zero divisor graph. In particular. Let \( W \) be any metric basis of \( BS(\Gamma(Z_{pq})) \). The set \( W \) consists of \( 2p - 3 \) vertices, as stated in Proposition \ref{ET}. 
      
 \begin{description}
     \item[Case 1:] \label{case1} If any two vertices of $W$ are adjacent or at least one vertex of $W$ is chosen from $U$ then there exist at least two vertices of $A$, denoted as $r_{u} $ and $r_{v} $, where $1\leq u\neq v \leq q-1$, such that $N[r_u]\cap W=\emptyset=N[r_v]\cap W$. This is because \( |A| = q-1 \), $d(r_s,u_j)=2$ for all $r_s\in A$ and $u_j\in U$, and \( N[r_m] \cap N[r_s] = \emptyset \), where $1\leq m\neq s \leq q-1$. Hence, it follows from the Remark \ref{22} that $\delta(r_{u}|W) =\delta(r_{v}|W)$.\\
 \item[Case 2:] \label{case2}~
 \begin{enumerate}
 \item If at least two distinct sets, say \(S^{\nu_1}\) and \(S^{\nu_2}\) (or \(T^{\nu_1}\) and \(T^{\nu_2}\), or \(S^{\nu_1}\) and \(T^{\nu_3}\)), have exactly one vertex of \(W\), let these vertices be \(s^{\nu_1}_i\) and \(s^{\nu_2}_j\), where \(1 \leq i, j \leq q-1\), then $\delta( s^{\nu_2}_i|W) =\delta( s^{\nu_1}_{j}|W)$.
 \item If at least two distinct sets, say \(S^{\nu_1}\) and \(S^{\nu_2}\) (or \(T^{\nu_1}\) and \(T^{\nu_2}\), or \(S^{\nu_1}\) and \(T^{\nu_3}\)), does not have any vertex of \(W\), then $\delta( u_{\nu_1}|W) =\delta( u_{\nu_2}|W)$, where $u_{\nu_1}$, and $u_{\nu_2}$ is adjacent to vertices of \(S^{\nu_1}\) and \(S^{\nu_2}\) respectively. 
 \item If one set, say \( S^{\nu_1} \) (or \( T^{\nu_2} \)), contains a single vertex of \( W \), denoted by \( s^{\nu_1}_i \), while another set, say \( S^{\nu_3} \) (or \( T^{\nu_4} \))  does not contain any vertices of \( W \), where \( 1 \leq \nu_1, \nu_2, \nu_3, \nu_4 \leq \frac{p-1}{2} \), then it follows that \[ \delta(s^{\nu_1}_j | W) = \delta(s^{\nu_3}_i | W) \] 
In this, \( s^{\nu_1}_j \) is adjacent to a vertex of \( A \), denoted as \( r_u \), where $r_u$ is not adjacent to any vertex of \( W \).
\item If at least one set, say \( S^k \) (or \( T^k \)), where \( 1 \leq k \leq \frac{p-1}{2} \), contains more than one vertex of \( W \) then, there exists at least two distinct sets, say \(S^{\nu_1}\) and \(S^{\nu_2}\) (or \(T^{\nu_1}\) and \(T^{\nu_2}\), or \(S^{\nu_1}\) and \(T^{\nu_3}\)), where \(1 \leq \nu_1, \nu_2, \nu_3 \leq \frac{p-1}{2}\), containing at most one vertex of $W$, since $|W|=2p-3$. Hence, this leads to the same contradiction as mentioned above.

 \end{enumerate}
 Without loss of generality, assume that each set $S^
 k$ and $T^k$ contain 2 vertices of $W$, except $S^1$. Since $|W|=2p-3$, one vertex remains to be chosen. If this vertex is chosen from the set $S^1$, then \( \delta(q_{u}|W) = \delta(r_{v}|W) \). In this case, \( q_u \) is adjacent to the vertices of \( S^{1} \), while \( r_{v} \) is not adjacent to any vertices of \( W \). Since \( |A| = q - 1 \) and \( N[r_m] \cap N[r_s] = \emptyset \) for \( 1 \leq m \neq s \leq q-1 \), it follows that there exists at least one vertex of \( A \) that is not adjacent to any vertices of \( W \).
 \item[Case 3:] ~
 \begin{enumerate}
    \item If any vertex of $A$ which is adjacent to at least two vertices of $W$, then we get the same contradiction as mentioned in Case 1.
    \item From Case 2, we observe that exactly \(2p-2\) vertices of \(S \cup T\) belong to \(W\). Additionally, from Case 1, it is clear that no vertex of \(U\) is included in \(W\). This indicates that exactly one vertex of \(A\) is in \(W\). Given that \(|W| = 2p-3\) and \(|A| = 2p-2\), along with the fact that \(N[r_m] \cap N[r_s] = \emptyset\), we can conclude that there must be one vertex in \(A\) that is not in \(W\) and is not adjacent to any vertices in \(W\).
 \end{enumerate}

\end{description}
 \end{proof}
 \begin{theorem} \label{theorem 3.2}
Let $p$ and $q$ be two distinct odd primes, where $q > p$ $\&$ $q=2p-1$. Suppose $R=\mathbb{Z}_n$, where $n=pq$, then  $fdim(BS(\Gamma(R))=q$.
\end{theorem}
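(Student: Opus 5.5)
Throughout, the bipartite sides of $\Gamma(\mathbb{Z}_{pq})$ are the $q-1$ multiples of $p$ and the $p-1$ multiples of $q$. In $BS(\Gamma(R))$ the set $A$ is the first side, $U$ is the second side, and $S\cup T$ consists of the $(p-1)(q-1)$ subdivision vertices. Each $S^\tau$ or $T^\tau$ is the set of subdivision vertices on the edges at one vertex $u\in U$.

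We prove the two inequalities $fdim(BS(\Gamma(R)))\ge q$ and $fdim(BS(\Gamma(R)))\le q$ separately.

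\textbf{Lower bound.} By Proposition \ref{ET}, for $q=2p-1$ we have $dim(BS(\Gamma(R)))=q-2$, so (\ref{a}) only gives $fdim\ge q-1$; one more unit is needed. Suppose $F$ is a fault-tolerant resolving set with $|F|=q-1=2p-2$. For every $a\in F$, the set $F\setminus\{a\}$ is resolving of size $q-2=2p-3$, hence a metric basis, and Lemma \ref{lemma1} applies to it.
\begin{itemize}
\item By Lemma \ref{lemma1}(1), every $F\setminus\{a\}$ is independent and misses $U$. Since $|F|\ge 3$, any two vertices of $F$ survive together in some $F\setminus\{a\}$, so $F$ itself is independent and $F\cap U=\emptyset$.
\item By Lemma \ref{lemma1}(3), each $F\setminus\{a\}$ contains exactly one vertex of $A$. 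Hence $|F\cap A|$ is $1$ or $2$.
\item If $|F\cap A|=2$, deleting an element of $F\setminus A$ leaves two vertices of $A$ in a metric basis, a contradiction. So $|F\cap A|=1$, say $F\cap A=\{r\}$. But then $F\setminus\{r\}$ is a metric basis with no vertex of $A$, again contradicting Lemma \ref{lemma1}(3).
\end{itemize}
(The counting in Lemma \ref{lemma1}(2) gives the same conclusion: a metric basis has $2p-4$ vertices in $S\cup T$ plus one in $A$. Then $F$ would have exactly $2p-4$ vertices in $S\cup T$ and two in $A$, and removing one of those two vertices of $A$ is impossible.) Therefore $fdim\ge q$.

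\textbf{Upper bound.} I would adapt the set $E$ from Theorem \ref{b} to $q=2p-1$ and add one vertex of $A$. Take
\[E'=\{r_1,r_2\}\cup\{t^1_2,t^1_3,\ldots,t^{\frac{p-1}{2}}_{p-1},t^{\frac{p-1}{2}}_{p}\}\cup\{s^1_{p+1},\ldots,s^{\frac{p-1}{2}}_{q-1}\}.\]
Its structure is:
\begin{itemize}
\item each $T^\tau$ contributes the two vertices $t^\tau_{2\tau},t^\tau_{2\tau+1}$;
\item the $S^\tau$ contribute the remaining indices $p+1,\ldots,2p-2=q-1$, two per set for $p-3$ of these indices, with one index left over;
\item in total $E'$ has $q$ vertices.
\end{itemize}
Then list the metric codes $\delta(x|E')$ family by family, for $x$ in $A$, $U$, $S^\tau$ and $T^\tau$, in the same tabular way as in Theorem \ref{b}. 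The distances are only $1,2,3,4$, determined by adjacency to a subdivision vertex and to the corresponding endpoints. Finally, check that for every $a\in E'$ the codes with respect to $E'\setminus\{a\}$ stay distinct.

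The main obstacle is this verification, in particular the one set among the $S^\tau$ that contains a single vertex of $E'$. Pairs inside that set, and pairs in $A$ whose only distinguishing coordinate is the deleted vertex, are the delicate ones. Here the two vertices $r_1,r_2\in A$ have to supply backup coordinates: $d(r_1,\cdot)$ separates $S^\tau\cup T^\tau$ vertices on edges at $r_1$ (distance $1$) from the rest (distance $3$). If a collision appears, I would first move the extra vertex of $A$ to sit in the neighbourhood of the singleton $S^\tau$ set. Failing that, I would replace it by a second vertex of that $S^\tau$ set.
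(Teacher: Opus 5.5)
Your lower bound is correct and is essentially the paper's argument. The paper writes a putative FTRS of size $q-1$ as $W\cup\{s\}$ with $W$ a metric basis, and derives a contradiction from Lemma~\ref{lemma1} according to whether $s\in U$, $s\in A$ or $s\in S\cup T$. Your count of $|F\cap A|$ via Lemma~\ref{lemma1}(3) is the same idea, stated more cleanly.

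The upper bound has a genuine gap: the set $E'$ you commit to is not a fault-tolerant resolving set. For $p=7$, $q=13$ it is $\{r_1,r_2,t^1_2,t^1_3,t^2_4,t^2_5,t^3_6,t^3_7,s^1_8,s^1_9,s^2_{10},s^2_{11},s^3_{12}\}$. Compare $s^3_4$ and $t^2_{12}$:
\begin{itemize}
\item both are at distance $3$ from $r_1$ and from $r_2$;
\item both are at distance $2$ from $t^2_4$ and from $s^3_{12}$;
\item both are at distance $4$ from every other vertex of $E'$ except $t^2_5$.
\end{itemize}
So $E'\setminus\{t^2_5\}$ does not resolve them.

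The failure is structural. Take $x$ on the edge $u_lr_k$ and $y$ on $u_{l'}r_{k'}$ with $l\neq l'$ and $k\neq k'$. Then:
\begin{itemize}
\item a vertex $r_j$ separates them iff $j\in\{k,k'\}$;
\item a vertex $u_i$ separates them iff $i\in\{l,l'\}$;
\item a subdivision vertex separates them iff it lies on an edge at $u_l$, $u_{l'}$, $r_k$ or $r_{k'}$ other than $u_{l'}r_k$ and $u_lr_{k'}$.
\end{itemize}
Now choose $u_l$ to be the vertex carrying $S^{\frac{p-1}{2}}$, whose only vertex in $E'$ is $s^{\frac{p-1}{2}}_{q-1}$. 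Take $k'=q-1$, and let $k\neq 2$ be the column of one of the two vertices of $E'\cap T^\tau$. Then the only separator left is the other vertex of $E'\cap T^\tau$, and this happens for every $p$. Your extra vertex $r_2$ helps only when $k=2$.

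So the $q$-th vertex must back up exactly these pairs. The paper adds $u_{p-1}$, the common neighbour of $S^{\frac{p-1}{2}}$, rather than a second vertex of $A$. It separates every pair with exactly one vertex in $S^{\frac{p-1}{2}}$, and also every pair $r_k,u_l$.

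Your fallbacks would in fact also work. One option is to use $r_{q-1}$, the neighbour of $s^{\frac{p-1}{2}}_{q-1}$, in place of $r_2$. Another is to replace $r_2$ by a second vertex of $S^{\frac{p-1}{2}}$, so that every $S^\tau$ and $T^\tau$ carries two vertices and at least two separators always survive for the pairs above. However, the proposal stakes the upper bound on a set that fails and leaves the decisive verification undone. As written, $fdim(BS(\Gamma(R)))\le q$ is not established.
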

\begin{proof}
\noindent Consider a ring $R =\mathbb{Z}_n$, where $n=pq$ and $q=2p-1$. In particular, We have partitioned the set $V(BS(\Gamma(R))$ into $4$ sets namely, $A=\{r_1,r_2,\ldots,r_{q-1}\}$,
      \( S = \bigcup \limits_{\tau=1}^{\frac{p-1}{2}} S^\tau \),
      \( T = \bigcup \limits_{\tau=1}^{\frac{p-1}{2}} T^\tau \), and
      $U=\{u_1,u_2, \ldots, u_{p-1}\}$.  These sets are illustrated in the Figure. \ref{fig:4}.\\
      
  \noindent Let $W$ be a metric basis of the graph $BS(\Gamma(R))$. If $F = W \cup \{s\}$ is considered a fault-tolerant resolving set for any $s \in V(BS(\Gamma(R))) \setminus W$, then we encounter the following contradictions:

\begin{enumerate}
    \item If a vertex \( s \) is chosen from the set \( U \), and any vertex from the set \( W \), such as \( w_1 \), is removed (due to the property of FTRS), then according to Lemma \ref{lemma1}, the set \( F \setminus \{ w_1 \} \) is not a metric basis because it contains a vertex from \( U \). 
    \item If a vertex $s$ is chosen from set $A$, and any vertex of $W$, say $w_1 \notin A$ is removed (due to the property of FTRS), then according to Lemma \ref{lemma1}, the set \( F \setminus \{ w_1 \} \) is not a metric basis. This is because it contains two vertices of $A$. 
    \item If a vertex \( s \) is chosen from set \( T \) (or $S$), and one vertex of \( W \), say \( a_i \in A \) is removed (due to the property FTRS), then according to Lemma \ref{lemma1}, the set \( F \setminus \{ a_i \} \) is not a metric basis. This is because  
 \( F \setminus \{ a_i \} \) does not contain any vertex of \( A \). 
\end{enumerate}
From the above observation, it is follows that, if \( F \setminus \{ w \} \), where \( w \in W \), is not a metric basis. Then \( F \setminus \{ w \} \) is also not a resolving set, since \( |F \setminus \{ w \}| = |W| \). 
Consequently, we conclude that \( F \) is not an FTRS. This implies that $fdim(BS(\Gamma(\mathbb{Z_{pq}}))>q-1$ when $q=2p-1$. We now need to prove that its FTMD is $q$. We must demonstrate that $E$ is a minimum FTRS consisting of $q$ vertices, where $E=\{ a _1, t^1_2,t^1_3,t^2_4,t^2_5, \ldots,t^\frac{p-1}{2}_{p-1},t^\frac{p-1}{2}_{p}, s^1_{p+1}, s^1_{p+2}, \ldots, s^\frac{p-5}{2}_{2p-6},\\
  s^\frac{p-5}{2}_{2p-5},s^\frac{p-3}{2}_{2p-4}, s^\frac{p-3}{2}_{2p-3}, s^\frac{p-1}{2}_{2p-2}, \ldots ,s^\frac{p-1}{2}_{q-1}, u_{p-1} \}$. \\

 The metric coordinates for the vertices $\{r_ \gamma:2\leq \noindent \gamma\leq q-1\}$ are given below
 \begin{equation*}
               \delta(r_{\gamma}|E)=
                   \begin{cases}
                   (4_{\{1\}},1_{\{ 2 \}},3_{\{3\}},\ldots,3_{\{q-3\}},3_{\{q-2\}}, 3_{\{q-3\}},3_{\{q-1\}}, 2_{\{q\}}) &\textit{if}\quad \gamma= 2;\\
                  
                    (4_{\{1\}},3_{\{2\}},\ldots,3_{\{\gamma -1\}},1_{\{ \gamma \}},3_{\{\gamma +1\}},\ldots,3_{\{q-2\}},3_{\{q-1\}}, 2_{\{q\}}) &\textit{if}\quad 3\leq \gamma $ $\leq q-3  ;\\
                    (4_{\{1\}},3_{\{2\}},\ldots,3_{\{q-3\}},1_{\{q-2\}}, 3_{\{q-1\}}, 2_{\{q\}}) &\textit{if}\quad \gamma= q-2;\\
                  
                    (4_{\{1\}},3_{\{2\}},3_{\{3\}},\ldots,3_{\{q-3\}},3_{\{q-2\}}, 1_{\{q-1\}}, 2_{\{q\}} ) &\textit{if}\quad \gamma= q-1.\\
                \end{cases}
                     \end{equation*}
\noindent The metric coordinates for the vertices 
                     $\{t^1_\gamma: 1\leq \gamma\leq q-1 \}$ are given below
                        \begin{equation*}
                        \delta(t^{1}_{\gamma}|E)=
                   \begin{cases}
                   (1_{\{1\}},2_{\{2\}},2_{\{3\}},4_{\{4\}},\ldots,4_{\{q-2\}}, 4_{\{q-1\}}, 3_{\{q\}})&\textit{if}\quad\gamma= 1;\\
                     (3_{\{1\}},2_{\{2\}}, 2_{\{3 \}},2_{\{ 4 \}}, 4_{\{ 5 \}},\ldots,4_{\{q-2\}}, 4_{\{q-1\}}, 3_{\{q\}})&\textit{if}\quad  \gamma= 4;\\
                     (3_{\{1\}},2_{\{2\}}, 2_{\{3 \}},4_{\{ 4 \}}, \ldots, 4_{\{\gamma-1 \}},2_{\{\gamma \}}, 4_{\{\gamma+1 \}},\ldots,4_{\{q-2\}}, 4_{\{q-1\}}, 3_{\{q\}})&\textit{if}\quad 5 \leq  \gamma  \leq q-3  ;\\
                     (3_{\{1\}},2_{\{2\}}, 2_{\{3 \}},4_{\{ 4 \}},\ldots, 4_{\{q-3\}},2_{\{q-2\}}, 4_{\{q-1\}}, 3_{\{q\}})&\textit{if}\quad \gamma= q-2;\\
                    (3_{\{1\}},2_{\{2\}},,2_{\{3\}}, 4_{\{4\}},\ldots,4_{\{q-2\}}, 2_{\{q-1\}}, 3_{\{q\}}) &\textit{if}\quad\gamma= q-1.\\\\
                    
                    \end{cases}
                    \end{equation*}

                    \noindent The metric coordinates for the vertices 
                     $\{t^2_\gamma: 1\leq \gamma\leq q-1 \}$ are given below
                        \begin{equation*}
                        \delta(t^{2}_{\gamma}|E)=
                   \begin{cases}
                   (1_{\{1\}},4_{\{2\}},4_{\{3\}},2_{\{4\}}, 2_{\{5\}}, 4_{\{6\}}, \ldots,4_{\{q-2\}}, 4_{\{q-1\}}, 3_{\{q\}})&\textit{if}\quad\gamma= 1;\\
                   (3_{\{1\}},2_{\{2\}},4_{\{3\}},2_{\{4\}}, 2_{\{5\}}, 4_{\{6\}},\ldots,4_{\{q-2\}}, 4_{\{q-1\}}, 3_{\{q\}})&\textit{if}\quad\gamma= 2;\\
                     (3_{\{1\}},4_{\{2\}},2_{\{3\}},2_{\{4\}}, 2_{\{5\}}, 4_{\{6\}},\ldots,4_{\{q-2\}}, 4_{\{q-1\}}, 3_{\{q\}})&\textit{if}\quad\gamma= 3;\\
                     (3_{\{1\}},4_{\{2\}},4_{\{3\}},2_{\{4\}}, 2_{\{5\}}, 2_{\{6\}}, 4_{\{7\}}, \ldots,4_{\{q-2\}}, 4_{\{q-1\}},3_{\{q\}})&\textit{if}\quad\gamma= 6;\\
                     (3_{\{1\}},4_{\{2\}}, 4_{\{3 \}},2_{\{ 4 \}}, 2_{\{ 5 \}}, 4_{\{ 6 \}}, \ldots, 4_{\{\gamma-1 \}},2_{\{\gamma \}}, 4_{\{\gamma+1 \}},\ldots,4_{\{q-2\}}, 4_{\{q-1\}}, 3_{\{q\}})&\textit{if}\quad 7 \leq  \gamma  \leq q-3  ;\\
                      (3_{\{1\}},4_{\{2\}},4_{\{3\}},2_{\{4\}}, 2_{\{5\}}, 4_{\{6\}}, \ldots, 4_{\{q-3\}} ,2_{\{q-2\}}, 4_{\{q-1\}}, 3_{\{q\}})&\textit{if}\quad\gamma= q-2;\\
                    (3_{\{1\}},4_{\{2\}},4_{\{3\}},2_{\{4\}}, 2_{\{5\}}, 4_{\{6\}},\ldots,4_{\{q-3\}} ,4_{\{q-2\}}, 2_{\{q-1\}}, 3_{\{q\}})&\textit{if}\quad\gamma= q-1.\\
                    \end{cases}
                    \end{equation*}
                 \noindent The metric coordinates for the vertices 
                     $\{t^\tau_\gamma:3\leq \tau\leq \frac{p-1}{2}$  $\&$  $1\leq \gamma\leq q-1 \}$ are given below
                        \begin{equation*}
                        \delta(t^{\tau}_{\gamma}|E)=
                   \begin{cases}
                   (1_{\{1\}},4_{\{2\}},\ldots, 4_{\{2\tau-1\}},2_{\{2\tau\}},2_{\{2\tau+1\}},4_{\{2\tau+2\}},\ldots,4_{\{q-2\}}, 4_{\{q-1\}}, 3_{\{q\}})&\\
                   \qquad \qquad \qquad\qquad\qquad\qquad\qquad\qquad\qquad\qquad\qquad\qquad\qquad\qquad\qquad \qquad \quad\textit{if}\quad\gamma= 1;\\
                     (3_{\{1\}},2_{\{ 2 \}},4_{\{3\}}, \ldots, 4_{\{2\tau-1\}},2_{\{2\tau\}},2_{\{2\tau+1\}},4_{\{2\tau+2\}},\ldots,4_{\{q-2\}}, 4_{\{q-1\}}, 3_{\{q\}})&\\
                   \qquad \qquad \qquad\qquad\qquad\qquad\qquad\qquad\qquad\qquad\qquad\qquad\qquad\qquad\qquad \qquad \quad\textit{if}\quad \gamma= 2;\\
                   (3_{\{1\}},4_{\{2\}}, \ldots,4_{\{\gamma-1 \}},2_{\{ \gamma \}},4_{\{\gamma +1\}}, \ldots, 4_{\{2\tau-1\}},2_{\{2\tau\}},2_{\{2\tau+1\}},4_{\{2\tau+2\}},\ldots,4_{\{q-2\}}, 4_{\{q-1\}}, 3_{\{q\}})&\\
                   \qquad \qquad \qquad\qquad\qquad\qquad\qquad\qquad\qquad\qquad\qquad\qquad\qquad\qquad\qquad \qquad \quad\textit{if}\quad  3 \leq  \gamma  \leq 2\tau-2  ;\\
                      (3_{\{1\}},4_{\{2\}}, \ldots,4_{\{2\tau-2 \}},2_{\{ 2\tau-1 \}},2_{\{2\tau\}},2_{\{2\tau+1\}},4_{\{2\tau+2\}},\ldots,4_{\{q-2\}}, 4_{\{q-1\}}, 3_{\{q\}})&\\
                   \qquad \qquad \qquad\qquad\qquad\qquad\qquad\qquad\qquad\qquad\qquad\qquad\qquad\qquad\qquad \qquad \quad\textit{if}\quad\gamma= 2\tau-1 ;\\
                     (3_{\{1\}},4_{\{2\}}, \ldots,4_{\{2\tau-1\}},2_{\{2\tau\}},2_{\{2\tau+1\}},2_{\{ 2\tau+2 \}}, 4_{\{2\tau+3 \}},\ldots,4_{\{q-2\}}, 4_{\{q-1\}}, 3_{\{q\}})&\\
                   \qquad \qquad \qquad\qquad\qquad\qquad\qquad\qquad\qquad\qquad\qquad\qquad\qquad\qquad\qquad \qquad \quad\textit{if}\quad  \gamma= 2\tau+2 ;\\
                    (3_{\{1\}},4_{\{2\}}, \ldots,4_{\{2\tau-1\}},2_{\{2\tau\}},2_{\{2\tau+1\}},4_{\{2\tau+2\}},\ldots,4_{\{\gamma-1 \}},2_{\{ \gamma \}},4_{\{\gamma +1\}},\ldots,4_{\{q-2\}}, 4_{\{q-1\}}, 3_{\{q\}})&\\
                   \qquad \qquad \qquad\qquad\qquad\qquad\qquad\qquad\qquad\qquad\qquad\qquad\qquad\qquad\qquad \qquad \quad\textit{if}\quad  2\tau+3 \leq  \gamma  \leq q-3  ;\\
                     (3_{\{1\}},4_{\{2\}}, \ldots,4_{\{2\tau-1\}},2_{\{2\tau\}},2_{\{2\tau+1\}},4_{\{2\tau+2\}},\ldots,4_{\{q-3 \}},2_{\{ q-2 \}}, 4_{\{q-1\}}, 3_{\{q\}})&\\
                   \qquad \qquad \qquad\qquad\qquad\qquad\qquad\qquad\qquad\qquad\qquad\qquad\qquad\qquad\qquad \qquad \quad\textit{if}\quad  \gamma=  q-2  ;\\
                    (3_{\{1\}},4_{\{2\}}, \ldots,4_{\{2\tau-1\}},2_{\{2\tau\}},2_{\{2\tau+1\}},4_{\{2\tau+2\}},\ldots,4_{\{q-2\}}, 2_{\{q-1\}}, 3_{\{q\}}) &\\
                   \qquad \qquad \qquad\qquad\qquad\qquad\qquad\qquad\qquad\qquad\qquad\qquad\qquad\qquad\qquad \qquad \quad\textit{if}\quad\gamma= q-1.\\
                    \end{cases}
                    \end{equation*}
                    \noindent The metric coordinates for the vertices 
                     $\{u_\tau:1\leq \tau\leq {p-1} \}$ are given below
                        \begin{equation*}
                        \delta(u_{\tau}|E)=
                   \begin{cases}
                   (2_{\{1\}},1_{\{2\}}, 1_{\{3\}}, 3_{\{4\}},\ldots,3_{\{q-2\}}, 3_{\{q-1\}}, 4_{\{q\}})&\textit{if}\quad  \tau=1 ;\\
                   (2_{\{1\}},3_{\{2\}}, \ldots, 3_{\{2\tau-1\}},1_{\{2\tau\}},1_{\{2\tau+1\}},3_{\{2\tau+2\}},\ldots,3_{\{q-2\}}, 3_{\{q-1\}}, 4_{\{q\}})&\textit{if}\quad  2 \leq \tau  \leq p-2 ;\\
                     (2_{\{1\}},3_{\{2\}}, \ldots, 3_{\{2\tau-1\}},1_{\{2\tau\}},1_{\{2\tau+1\}},\ldots,1_{\{q-1\}}, 0_{\{q\}})&\textit{if}\quad  \tau= p-1 .\\
                     
                     \end{cases}
                     \end{equation*}
\noindent The metric coordinates for the vertices 
                     $\{s^\tau_\gamma:1\leq \tau\leq \frac{p-3}{2} $ $ \& $  $1\leq \gamma\leq q-1 \}$ are given below
                        \begin{equation*}
                        \delta(s^{\tau}_{\gamma}|E)=
                   \begin{cases}
                   (1_{\{1\}},4_{\{2\}},\ldots,4_{\{2\tau+p-2\}},2_{\{p+2\tau-1\}},2_{\{p+2\tau\}},4_{\{p+2\tau+1\}},\ldots,4_{\{q-2\}}, 4_{\{q-1\}}, 3_{\{q\}})&\\
                   \qquad \qquad \qquad\qquad\qquad\qquad\qquad\qquad\qquad\qquad\qquad\qquad\qquad\qquad\qquad \qquad \quad \textit{if}\quad\gamma= 1;\\
                    (3_{\{1\}},2_{\{2 \}},4_{\{3\}},\ldots,4_{\{2\tau+p-2\}},2_{\{p+2\tau-1\}},2_{\{p+2\tau\}},4_{\{p+2\tau+1\}},\ldots,4_{\{q-2\}}, 4_{\{q-1\}},3_{\{q\}})&\\
                   \qquad \qquad \qquad\qquad\qquad\qquad\qquad\qquad\qquad\qquad\qquad\qquad\qquad\qquad\qquad \qquad \quad \textit{if}\quad   \gamma= 2;\\
                     
                   (3_{\{1\}},4_{\{2\}}, \ldots,4_{\{\gamma-1 \}},2_{\{ \gamma \}},4_{\{\gamma +1\}},\ldots,4_{\{2\tau+p-2\}},2_{\{p+2\tau-1\}},2_{\{p+2\tau\}},4_{\{p+2\tau+1\}},\ldots,4_{\{q-2\}}, 4_{\{q-1\}}, 3_{\{q\}})&\\
                   \qquad \qquad \qquad\qquad\qquad\qquad\qquad\qquad\qquad\qquad\qquad\qquad\qquad\qquad\qquad \qquad \quad \textit{if} \quad  3 \leq  \gamma  \leq 2\tau+p-3 ;\\

                     (3_{\{1\}},4_{\{2\}}, \ldots,4_{\{\gamma-1 \}},2_{\{ 2\tau+p-2 \}},2_{\{p+2\tau-1\}},2_{\{p+2\tau\}},4_{\{p+2\tau+1\}},\ldots,4_{\{q-2\}}, 4_{\{q-1\}}, 3_{\{q\}})&\\
                   \qquad \qquad \qquad\qquad\qquad\qquad\qquad\qquad\qquad\qquad\qquad\qquad\qquad\qquad\qquad \qquad \quad\textit{if}\quad \gamma = 2\tau+p-2 ;\\
                     (3_{\{1\}},4_{\{2\}}, \ldots,4_{\{2\tau+p-2\}},2_{\{p+2\tau-1\}},2_{\{p+2\tau\}},2_{\{p+2\tau+1\}}, 4_{\{p+2\tau+2\}}, \ldots,4_{\{q-2\}}, 3_{\{q\}})&\\
                   \qquad \qquad \qquad\qquad\qquad\qquad\qquad\qquad\qquad\qquad\qquad\qquad\qquad\qquad\qquad \qquad \quad \textit{if}\quad\gamma= p+2\tau+1 ;\\
                    (3_{\{1\}},4_{\{2\}}, \ldots,4_{\{2\tau+p-2\}},2_{\{p+2\tau-1\}},2_{\{p+2\tau\}},4_{\{p+2\tau+1\}},\ldots,4_{\{\gamma-1 \}},2_{\{ \gamma \}},4_{\{\gamma +1\}},\ldots,4_{\{q-2\}}, 4_{\{q-1\}}, 3_{\{q\}})&\\
                   \qquad \qquad \qquad\qquad\qquad\qquad\qquad\qquad\qquad\qquad\qquad\qquad\qquad\qquad\qquad \qquad \quad \textit{if} \quad p+2\tau+2 \leq  \gamma  \leq q-3  ;\\
                     (3_{\{1\}},4_{\{2\}}, \ldots,4_{\{2\tau+p-2\}},2_{\{p+2\tau-1\}},2_{\{p+2\tau\}},4_{\{p+2\tau+1\}},\ldots, 4_{\{q-3\}},2_{\{q-2\}}, 4_{\{q-1\}}, 3_{\{q\}})&\\
                   \qquad \qquad \qquad\qquad\qquad\qquad\qquad\qquad\qquad\qquad\qquad\qquad\qquad\qquad\qquad \qquad \quad  \textit{if}\quad   \gamma = q-2  ;\\
                    (3_{\{1\}},4_{\{2\}}, \ldots,4_{\{2\tau+p-2\}},2_{\{p-1+2\tau\}},2_{\{p+2\tau\}},4_{\{p+2\tau+1\}},\ldots,4_{\{q-2\}}, 2_{\{q-1\}}, 3_{\{q\}}) &\\
                   \qquad \qquad \qquad\qquad\qquad\qquad\qquad\qquad\qquad\qquad\qquad\qquad\qquad\qquad\qquad \qquad \quad \textit{if}\quad\gamma= q-1.\\
                      \end{cases}
                    \end{equation*}
                    \noindent The metric coordinates for the vertices 
                     $\{s^\tau_\gamma: \tau =\frac{p-1}{2} $ $\&$  $1\leq \gamma\leq q-1 \}$ are given below
                        \begin{equation*}
                        \delta(s^{\tau}_{\gamma}|E)=
                   \begin{cases}
                   (1_{\{1\}},4_{\{2\}},\ldots,4_{\{2p-3 \}},2_{\{2p-2 \}}, \ldots,2_{\{q-3\}},2_{\{q-2\}}, 2_{\{q-1\}}, 1_{\{q\}})&\\
                   \qquad \qquad \qquad\qquad\qquad\qquad\qquad\qquad\qquad\qquad\qquad\qquad\qquad\qquad\qquad \qquad \quad\textit{if}\quad\gamma= 1;\\
                    (3_{\{1\}},2_{\{2\}},4_{\{3\}}, \ldots, 4_{\{2p-3 \}},2_{\{2p-2 \}}, \ldots,2_{\{q-3\}},2_{\{q-2\}}, 2_{\{q-1\}}, 1_{\{q\}}) &\\
                   \qquad \qquad \qquad\qquad\qquad\qquad\qquad\qquad\qquad\qquad\qquad\qquad\qquad\qquad\qquad \qquad \quad\textit{if}\quad\gamma= 2;\\ 
                   (3_{\{1\}},4_{\{2\}}, \ldots,4_{\{\gamma-1\}},2_{\{ \gamma\}},4_{\{\gamma +1\}}, \ldots, 4_{\{2p-3 \}},2_{\{2p-2 \}}, \ldots,2_{\{q-3\}},2_{\{q-2\}}, 2_{\{q-1\}}, 1_{\{q\}}) &\\
                   \qquad \qquad \qquad\qquad\qquad\qquad\qquad\qquad\qquad\qquad\qquad\qquad\qquad\qquad\qquad \qquad \quad\textit{if}\quad   3 \leq  \gamma  \leq 2p-4  ;\\
                      (3_{\{1\}},4_{\{2\}}, \ldots, 4_{\{2p-4 \}} 2_{\{2p-3 \}},2_{\{2p-2 \}}, \ldots,2_{\{q-3\}},2_{\{q-2\}}, 2_{\{q-1\}}, 1_{\{q\}}) &\\
                   \qquad \qquad \qquad\qquad\qquad\qquad\qquad\qquad\qquad\qquad\qquad\qquad\qquad\qquad\qquad \qquad \quad\textit{if}\quad   \gamma= 2p-3;\\
                    (3_{\{1\}},4_{\{2\}}, \ldots, 4_{\{2p-3\}},2_{\{2p-2\}}, \ldots,2_{\{q-3\}},2_{\{q-2\}}, 2_{\{q-1\}}, 1_{\{q\}}) &\\
                   \qquad \qquad \qquad\qquad\qquad\qquad\qquad\qquad\qquad\qquad\qquad\qquad\qquad\qquad\qquad \qquad \quad\textit{if}\quad\gamma= q-1.\\
                     \end{cases}
                    \end{equation*}

                    \noindent After analyzing the above codes, it is clear that removing any vertices in set $E$ (i.e., $E-\{a\}$, for any $a\in E$ ) while ensuring that all vertices have unique metric coordinates with respect to $E-\{a\}$ preserves the property of the fault- tolerance in $E$. This implies that $E$ is an FTRS, leading to $fdim(BS\Gamma(G)) \leq q$. Thus $fdim(BS\Gamma(G))=q$.   \\
                    \end{proof}
                    \begin{exm}
\noindent  {{ For $p=7$ $\&$ $q=13$ }}\\
\noindent Consider a ring $R =\mathbb{Z}_{91}$. We define $BS(\Gamma(\mathbb{Z}_{91}))$ as the barycentric subdivision of $\Gamma(\mathbb{Z}_{91})$. The graph $BS(\Gamma(\mathbb{Z}_{91}))$ consists of  $90$ vertices and $144$ edges. In particular, we have eight sets of vertices: $A=\{r_1,r_2,\ldots,r_{12}\}$, $S^1=\{s^1_1,s^1_2,\ldots,s^1_{12}\}$, $S^2=\{s^2_1,s^2_2,s^2_3,\ldots,s^2_{12} \}$, 
$S^3=\{s^3_1,s^3_2,s^3_3,\ldots,s^3_{12} \}$, $T^1=\{t^1_1,t^1_2,\ldots,t^1_{12}\}$, $T^2=\{t^2_1,t^2_2,t^2_3,\ldots,t^2_{12} \}$,  
   $T^3=\{t^3_1,t^3_2,t^3_3,\ldots,t^3_{12} \}$  and $U=\{u_1,u_2, u_3, u_4, u_5,u_6\}$. These sets are illustrated in Figure.\ref{fig:10}.\\
    
\begin{figure}[ht!]
    \centering
    \includegraphics[width=10cm]{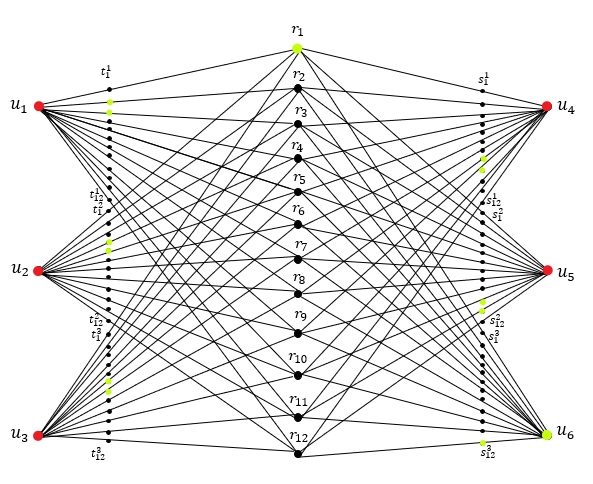}
    \caption{Barycentric Subdivision of Zero Divisor Graph $\mathbb{Z}_{91}$}
    \label{fig:10}
\end{figure}

\noindent Let us consider a set $E=\{ r_1, t^1_2,t^1_3,t^2_4,t^2_5, t^3_6,t^3_7, s^1_{8},s^1_{9}, s^2_{10}, s^2_{11}, s^3_{12}, u_{6}\}$. 
Then, to show that $E$  is an FTRS of $BS(\Gamma(R))$, we need to assign unique metric coordinates for every vertex of $V(BS(\Gamma(R)))-E$ with respect to set $E-\{a\}$, $\forall~a\in E$.\\ 

\noindent Metric coordinates for every vertex of $V(BS(\Gamma(R)))-E$ with respect to set $E$ are given below 
\begin{table}[H]
    \centering
    \begin{tabular}{|p{0.35\linewidth}|p{0.35\linewidth}
    |}
      \hline
        $\delta(t^3_1|E)=(1,4,4,4,4,2,2,4,4,4,4,4,3)$ &  $\delta(s^1 _1|E)=(1,4,4,4,4,4,4,2,2,4,4,4,3)$\\ 
       $\delta(t^3 _2|E)= (3,2,4,4,4,2,2,4,4,4,4,4,3)$ &  $\delta(s^1 _2|E)=(3,2,4,4,4,4,4,2,2,4,4,4,3)$\\ 
       $\delta(t^3 _3|E)= (3,4,2,4,4,2,2,4,4,4,4,4,3)$ &  $\delta(s^1_3|E)=(3,4,2,4,4,4,4,2,2,4,4,4,3)$  \\
        $\delta(t^3 _4|E)= (3,4,4,2,4,2,2,4,4,4,4,4,3)$ &  $\delta(s^1 _4|E)=(3,4,4,2,4,4,4,2,2,4,4,4,3)$ \\
       $\delta(t^3 _5|E)= (3,4,4,4,2,2,2,4,4,4,4,4,3)$ & $\delta(s^2 _5|E)=(3,4,4,4,2,4,4,2,2,4,4,4,3)$  \\
    $\delta(t^3_6|E)= (3,4,4,4,4,0,2,4,4,4,4,4,3)$ &  $\delta(s^1 _6|E)=(3,4,4,4,4,2,4,2,2,4,4,4,3)$   \\
      $\delta(t^3_7|E)= (3,4,4,4,4,2,0,4,4,4,4,4,3)$ &  $\delta(s^1 _7|E)=(3,4,4,4,4,4,2,2,2,4,4,4,3)$ \\
     $\delta(t^3_8|E)= (3,4,4,4,4,2,2,2,4,4,4,4,3)$ &  $\delta(s^1 _8|E)=(3,4,4,4,4,4,4,0,2,4,4,4,3)$ \\
     $\delta(t^3_9|E)= (3,4,4,4,4,2,2,4,2,4,4,4,3)$ &  $\delta(s^1 _9|E)=(3,4,4,4,4,4,4,2,0,4,4,4,3)$   \\
   $\delta(t^3_{10}|E)= $$(3,4,4,4,4,2,2,4,4,2,4,4,3)$ &  $\delta(s^1 _{10}|E)=(3,4,4,4,4,4,4,2,2,2,4,4,3)$   \\
   $\delta(t^3_{11}|E)= (3,4,4,4,4,2,2,4,4,4,2,4,3)$&$\delta(s^1 _{10}|E)=(3,4,4,4,4,4,4,2,2,4,2,4,3)$ \\
   $\delta(t^3_{12}|E)= (3,4,4,4,4,2,2,4,4,2,4,2,3)$ & $\delta(s^1 _{10}|E)=(3,4,4,4,4,4,4,2,2,4,4,2,3)$ \\
      \hline
      \end{tabular}
      
    \end{table}
    \begin{table}[H]
    \centering
    \begin{tabular}{|p{0.35\linewidth}|p{0.35\linewidth}
    |}
       \hline
        $\delta(s^2_1|E)=(1,4,4,4,4,4,4,4,4,2,2,4,3)$ &  $\delta(s^3 _1|E)=(1,4,4,4,4,4,4,4,4,4,4,2,1)$\\ 
       $\delta(s^2 _2|E)= (3,2,4,4,4,4,4,4,4,2,2,4,3)$ &  $\delta(s^3 _2|E)=(3,2,4,4,4,4,4,4,4,4,4,2,1)$\\ 
       $\delta(s^2 _3|E)= (3,4,2,4,4,4,4,4,4,2,2,4,3)$ &  $\delta(s^3 _3|E)=(3,4,2,4,4,4,4,4,4,4,4,2,1)$  \\
        $\delta(s^2 _4|E)= (3,4,4,2,4,4,4,4,4,2,2,4,3)$ &  $\delta(s^3 _4|E)=(3,4,4,2,4,4,4,4,4,4,4,2,1)$ \\
       $\delta(s^2_5|E)= (3,4,4,4,2,4,4,4,2,2,4,4,3)$ & $\delta(s^3 _5|E)=(3,4,4,4,2,4,4,4,4,4,4,2,1)$  \\
    $\delta(s^2_6|E)= (3,4,4,4,4,2,4,4,4,4,2,2,3)$ &  $\delta(s^3 _6|E)=(3,4,4,4,4,2,4,4,4,4,4,2,1)$   \\
      $\delta(s^2_7|E)= (3,4,4,4,4,4,2,4,4,2,2,4,3)$ &  $\delta(s^3 _7|E)=(3,4,4,4,4,4,2,4,4,4,4,2,1)$ \\
     $\delta(s^2_8|E)= (3,4,4,4,4,4,4,2,4,2,2,4,3)$ &  $\delta(s^3 _8|E)=(3,4,4,4,4,4,4,2,4,4,4,2,1)$ \\
     $\delta(s^2_9|E)= (3,4,4,4,4,4,4,4,2,2,2,4,3)$ &  $\delta(s^3 _9|E)=(3,4,4,4,4,4,4,4,2,4,4,2,1)$   \\
   $\delta(s^2_{10}|E)= $$(3,4,4,4,4,4,4,4,4,0,2,4,3)$ &  $\delta(s^3 _{10}|E)=(3,4,4,4,4,4,4,4,4,2,4,2,1)$   \\
   $\delta(s^2_{11}|E)= (3,4,4,4,4,4,4,4,4,2,0,4,3)$&$\delta(s^3 _{10}|E)=(3,4,4,4,4,4,4,4,4,4,2,2,1)$ \\
   $\delta(s^3_{12}|E)= (3,4,4,4,4,4,4,4,4,2,2,2,3)$ & $\delta(s^2 _{10}|E)=(3,4,4,4,4,4,4,4,4,4,4,0,1)$ \\
      \hline
      \end{tabular}
    \end{table}
    \begin{table}[H]
    \centering
    \begin{tabular}{|p{0.35\linewidth}|p{0.35\linewidth}|}
      \hline
       $\delta(t^1 _1|E)=(1,2,2,4,4,4,4,4,4,4,4,4,3)$& $\delta(t^2 _1|E)=(1,4,4,2,2,4,4,4,4,4,4,4,3)$\\
      $\delta(t^1 _2|E)= (3,0,2,4,4,4,4,4,4,4,4,4,3)$& $\delta(t^2 _2|E)=(3,2,4,2,2,4,4,4,4,4,4,4,3
      )$\\
      $\delta(t^1 _3|E)= (3,2,0,4,4,4,4,4,4,4,4,4,3)$& $\delta(t^2 _3|E)=(3,4,2,2,2,4,4,4,4,4,4,4,3)$\\
      $\delta(t^1 _4|E)= (3,2,2,2,4,4,4,4,4,4,4,4,3)$& $\delta(t^2 _4|E)=(3,4,4,0,2,4,4,4,4,4,4,4,3)$\\
      $\delta(t^1 _5|E)= (3,2,2,4,2,4,4,4,4,4,4,4,3)$& $\delta(t^2 _5|E)=(3,4,4,2,0,4,4,4,4,4,4,4,3)$\\
      $\delta(t^1 _6|E)= (3,2,2,4,4,2,4,4,4,4,4,4,3)$ & $\delta(t^2 _6|E)=(3,4,4,2,2,2,4,4,4,4,4,4,3)$\\
      $\delta(t^1 _7|E)= (3,2,2,4,4,4,2,4,4,4,4,4,3)$& $\delta(t^2 _7|E)=(3,4,4,2,2,4,2,4,4,4,4,4,3)$\\
      $\delta(t^1 _8|E)= (3,2,2,4,4,4,4,2,4,4,4,4,3)$& $\delta(t^2 _8|E)=(3,4,4,2,2,4,4,2,4,4,4,4,3)$\\
      $\delta(t^1 _9|E)= (3,2,2,4,4,4,4,4,2,4,4,4,3)$& $\delta(t^2 _9|E)=(3,4,4,2,2,4,4,4,2,4,4,4,3)$\\
      $\delta(t^1 _{10}|E)= $$(3,2,2,4,4,4,4,4,4,2,4,4,3)$& $\delta(t^2 _{10}|E)=(3,4,4,2,2,4,4,4,4,2,4,4,3)$\\
      $\delta(t^1 _{10}|E)= $$(3,2,2,4,4,4,4,4,4,4,2,4,3)$&$\delta(t^2 _{11}|E)=(3,4,4,2,2,4,4,4,4,4,2,4,3)$\\
      $\delta(t^1 _{12}|E)= $$(3,2,2,4,4,4,4,4,4,4,4,2,3)$&$\delta(t^2 _{12}|E)=(3,4,4,2,2,4,4,4,4,4,4,2,3)$\\
      \hline
    \end{tabular}   
    \end{table} 
    \begin{table}[h!]
    \centering
    \begin{tabular}{|p{0.35\linewidth}|p{0.35\linewidth}|}      
      \hline
   $\delta(r^1 _1|E)=(0,3,3,3,3,3,3,3,3,3,3,3,2)$ &  $ \delta(u_1|E)=(2,1,1,3,3,3,3,3,3,3,3,3,3)$\\
  $\delta(r^1 _2|E)= (4,1,3,3,3,3,3,3,3,3,3,3,2) $ &  $ \delta(u_2|E)=(2,3,3,1,1,3,3,3,3,3,3,3,3)$\\
  $\delta(r^1 _3|E)= (4,3,1,3,3,3,3,3,3,3,3,3,2)$ &$ \delta(u_3|E)=(2,3,3,3,3,1,1,3,3,3,3,3,3)$\\
$\delta(r^1 _4|E)= (4,3,3,1,3,3,3,3,3,3,3,3,2)$ &$ \delta(u_4|E)=(2,3,3,3,3,3,3,1,1,3,3,3,3)$\\
  $\delta(r^1 _5|E)= (4,3,3,3,1,3,3,3,3,3,3,3,2)$ & $ \delta(u_5|E)=(2,3,3,3,3,3,3,3,3,1,1,3,3)$ \\
    $\delta(r^1 _6|E)= (4,3,3,3,3,1,3,3,3,3,3,3,2)$ & $ \delta(u_6|E)=(2,3,3,3,3,3,3,3,3,3,3,1,0)$ \\
 $\delta(r^1 _7|E)= (4,3,3,3,3,3,1,3,3,3,3,3,2)$ &  \\
 $\delta(r^1 _8|E)= (4,3,3,3,3,3,3,1,3,3,3,3,2)$   & \\
   $\delta(r^1 _9|E)= (4,3,3,3,3,3,3,3,1,3,3,3,2)$  &  \\
 $\delta(r^1 _{10}|E)= (4,3,3,3,3,3,3,3,3,1,3,3,2)$   & \\
 $\delta(r^1 _{11}|E)= (4,3,3,3,3,3,3,3,3,3,1,3,2)$   & \\
 $\delta(r^1 _{12}|E)= (4,3,3,3,3,3,3,3,3,3,3,1,2)$   & \\
      \hline
      \end{tabular}
\label{tab:example3}
\end{table}
\noindent Based on the above codes, it is evident that if we remove any single vertex from $E$ (i.e., $E-\{a\}$, for any  $a\in E $), then all vertices with unique metric coordinates with respect to $E-\{a\}$ indicate that $E$ is an FTRS. According to Theorem \ref{theorem 3.2}, it is evident that $fdim(BS(\Gamma(\mathbb{Z}_{91}))) = 13$ ($\because$ it is in the form $2p-1$). Therefore, the set $E$ is the minimum FTRS for $BS(\Gamma(\mathbb{Z}_{91}))$.
\end{exm}
\begin{theorem}\label{z}
Let $p$ and $q$ be two distinct odd primes, where $q>p$ $\&$ $q = 2p-3$. Suppose $R=\mathbb{Z}_n$, where $n=pq$, then,
$fdim(BS(\Gamma(R))
)=q$.
\end{theorem}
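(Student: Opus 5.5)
The lower bound is immediate from Proposition \ref{ET}(3): for $q=2p-3$ we have $dim(BS(\Gamma(R)))=q-1$, so the inequality (\ref{a}) gives $fdim(BS(\Gamma(R)))\geq q$. Unlike Theorem \ref{theorem 3.2}, no structural lemma in the style of Lemma \ref{lemma1} is needed, because the metric dimension itself has already gone up by one. The whole proof therefore reduces to exhibiting a fault-tolerant resolving set of size $q$.

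For the upper bound I will use the partition $A=\{r_1,\ldots,r_{q-1}\}$, $S=\bigcup S^\tau$, $T=\bigcup T^\tau$ ($1\leq\tau\leq\frac{p-1}{2}$) and $U=\{u_1,\ldots,u_{p-1}\}$ from Figure \ref{fig:4}. I will build $E$ by the same pattern as in Theorems \ref{b} and \ref{theorem 3.2}:
\begin{itemize}
\item take $r_1$ and one vertex of $U$;
\item take two consecutive-index vertices from each $T^\tau$ and each $S^\tau$, with indices running through $2,3,\ldots$ so that every index $2,\ldots,q-1$ is covered by exactly one chosen vertex.
\end{itemize}
Since $q=2p-3$, there are only $q-2=2p-5$ indices $2,\ldots,q-1$ to distribute among the $p-1$ sets $S^\tau,T^\tau$. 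Pairing as before therefore exhausts the indices before the last set is reached. So one of the sets $S^{(p-1)/2}$ or $T^{(p-1)/2}$ gets no vertex from the pairing, and $E$ must compensate with an extra vertex. My first choice is to add one vertex of that last set $S^{(p-1)/2}$ with index $1$, or a second vertex of $U$ adjacent to it, whichever gives distinct codes. With $r_1$ and $u_{p-1}$ counted, this gives $|E|=1+(q-2)+1=q$, and the adjusted layout is checked to resolve.

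The verification lists $\delta(x|E)$ for every $x\notin E$, grouped as $r_\gamma$, $t^\tau_\gamma$, $s^\tau_\gamma$, $u_\tau$, exactly in the format used earlier. The distance values are $0,\ldots,4$ as before: $r$ to a chosen $t$ or $s$ has distance $1$ if adjacent and $3$ otherwise; $s$ or $t$ to a chosen vertex in the same index column has distance $2$, otherwise $4$; and $u$ has distance $1$ to its own block and $3$ to other blocks. After listing the codes, I will check fault tolerance by showing that every pair of distinct vertices differs in at least two coordinates of $\delta(\cdot|E)$. This is equivalent to $E\setminus\{a\}$ resolving for every $a\in E$.

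The main obstacle is this two-coordinate separation, particularly for the following pairs:
\begin{itemize}
\item vertices $r_\gamma,r_{\gamma'}$ of $A$: Remark \ref{22} says they differ only via neighbours, so each $r_\gamma$ with $\gamma\geq 2$ needs its unique neighbour in $E$, plus the column coordinate;
\item vertices $s^\tau_\gamma,t^{\tau'}_\gamma$ lying in the same index column but different blocks.
\end{itemize}
Here the coordinate from the block's pair, or from the extra $U$ and $S^{(p-1)/2}$ vertex, must supply the second difference. If a pair fails, I would adjust which block receives the extra vertex, swapping the roles of $S^{(p-1)/2}$ and $T^{(p-1)/2}$, until all pairs of codes differ in at least two positions. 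I would also sanity-check the construction on the example $p=5$, $q=7$ ($n=35$).
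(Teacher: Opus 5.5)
\textbf{There is a genuine gap in the upper bound.} Your lower bound is exactly the paper's (Proposition~\ref{ET}(3) together with (\ref{a})). The upper bound does not go through:
\begin{itemize}
\item You never fix a set.
\item Your bookkeeping is inconsistent: $r_1$, a vertex of $U$, the $q-2$ paired vertices and a compensating vertex make $q+1$ vertices, not $q$.
\item Every size-$q$ version you describe fails to be fault tolerant.
\end{itemize}

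\textbf{Why the layout fails.} Suppose a block $B$ carries a single chosen vertex at an index $c$ covered only once. Suppose another block $B'$ carries chosen vertices at indices $d,e$, each covered only once. Compare the vertex of $B$ at index $d$ with the vertex of $B'$ at index $c$:
\begin{itemize}
\item both are at distance $2$ from $B$'s landmark (same block, resp.\ same index);
\item both are at distance $2$ from $B'$'s landmark at $d$ (same index, resp.\ same block);
\item both are at distance $3$ from $r_1$;
\item both are at distance $4$ from every chosen $s$ or $t$ outside $B\cup B'$.
\end{itemize}
So only $B'$'s landmark at $e$, and possibly the $U$-neighbour of $B$ or $B'$, separates them.

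In your layout, filled as in Theorems~\ref{b} and~\ref{theorem 3.2}, take $B=S^{\frac{p-3}{2}}$ (single vertex $s^{\frac{p-3}{2}}_{q-1}$) and $B'=T^1$. Then $s^{\frac{p-3}{2}}_2$ and $t^1_{q-1}$ are separated only by $t^1_3$. This holds whether your extra vertex is $s^{\frac{p-1}{2}}_1$ or the $U$-vertex next to $S^{\frac{p-1}{2}}$. For $p=5$ this is the pair $s^1_2$, $t^1_6$.

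Putting the $U$-vertex next to $S^{\frac{p-3}{2}}$ instead only moves the problem. Then $S^{\frac{p-1}{2}}$ has no landmark, and $s^{\frac{p-1}{2}}_2$, $t^1_3$ are separated only by $t^1_3$ itself.

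Your distance rule (``same index column gives $2$, otherwise $4$'') omits the same-block case: distance $2$ through the common $U$-neighbour. That omission is exactly what creates these coincidences. The dangerous pairs are these crossing pairs, not the same-column pairs you single out.

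\textbf{The missing idea} is the paper's placement. Take no $U$-vertex, and give the two short blocks their single vertices at the \emph{same} index:
$E=\{r_1,t^1_2,t^1_3,\dots,t^{\frac{p-1}{2}}_{p-1},t^{\frac{p-1}{2}}_{p},s^1_{p+1},s^1_{p+2},\dots,s^{\frac{p-5}{2}}_{2p-5},s^{\frac{p-3}{2}}_{q-1},s^{\frac{p-1}{2}}_{q-1}\}$.
The $s$-pairs are absent when $p=5$, and $|E|=1+(p-1)+(p-5)+2=q$. Now index $1$ is covered by $r_1$, indices $2,\dots,q-2$ once each, index $q-1$ twice, and every block contains a landmark. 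The twin at index $q-1$ supplies the missing second separation: for instance, $s^{\frac{p-3}{2}}_2$ and $t^1_{q-1}$ are now separated by both $t^1_3$ and $s^{\frac{p-1}{2}}_{q-1}$. With this set your two-coordinate criterion can be checked case by case. With the set you propose, $fdim\le q$ is not established.
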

\begin{proof}
Consider a ring $R = \mathbb{Z}_n$, where $n=pq$, where $q = 2p-3$. Let $\Gamma(R)$ denote the zero divisor graph of $R$. We define $BS(\Gamma(R))$ as the barycentric subdivision of $\Gamma(R)$.\\

\noindent From the Proposition \ref{ET}, and Equation \ref{a} it is clear that $fdim(BS(\Gamma(R))
)\geq q$, We now need to prove that its FTMD is $q$. We must demonstrate that $E$ is a minimum FTRS consisting of $q$ vertices, $E=\{   a _1, t^1_2,t^1_3,t^2_4,t^2_5,\ldots,t^\frac{p-1}{2}_{p-1},t^\frac{p-1}{2}_{p}, s^1_{p+1}, \\
s^1_{p+2},\ldots, s^\frac{p-5}{2}_{2p-6},s^\frac{p-5}{2}_{2p-5},s^\frac{p-3}{2}_{2p-4},
\ldots,s^\frac{p-3}{2}_{q-1}, s^\frac{p-1}{2}_{q-1}\}$.\\ 

 The metric coordinates for the vertices $\{r_ \gamma:1\leq \noindent \gamma\leq q-1\}$ are given below
 \begin{equation*}
               \delta(r_{\gamma}|E)=
                   \begin{cases}
                   (4_{\{1\}},1_{\{ 2 \}},3_{\{3\}},\ldots,3_{\{q-2\}},3_{\{q-1\}}, 3_{\{q\}}) &\textit{if}\quad \gamma =2  ;\\
                    (4_{\{1\}},3_{\{2\}},\ldots,3_{\{\gamma -1\}},1_{\{ \gamma \}},3_{\{\gamma +1\}},\ldots,3_{\{q-1\}}, 3_{\{q\}}) &\textit{if}\quad 3\leq \gamma $ $\leq q-2  ;\\
                    (4_{\{1\}},3_{\{2\}},\ldots, 3_{\{q-2\}}),1_{\{q-1\}}, 1_{\{q\}}) &\textit{if}\quad  \gamma = q-1  .\\
                    \end{cases}
                     \end{equation*}
                     
                    \noindent The metric coordinates for the vertices 
                     $\{s^\tau_\gamma:1\leq \tau\leq \frac{p-5}{2} $ $ \& $  $1\leq \gamma\leq q-1 \}$ are given below
                        \begin{equation*}
                        \delta(s^{\tau}_{\gamma}|E)=
                   \begin{cases}
                   (1_{\{1\}},4_{\{2\}},\ldots,4_{\{2\tau+p-2\}},2_{\{p+2\tau-1\}},2_{\{p+2\tau\}},4_{\{p+2\tau+1\}},\ldots,4_{\{q-1\}}, 4_{\{q\}})&\\
                   \qquad \qquad \qquad\qquad\qquad\qquad\qquad\qquad\qquad\qquad\qquad\qquad\qquad\qquad\qquad \qquad \quad \textit{if}\quad \gamma= 1;\\
         (3_{\{1\}}, 2_{\{ 2\}},4_{\{3\}},\ldots,4_{\{2\tau+p-2\}},2_{\{p+2\tau-1\}},2_{\{p+2\tau\}},4_{\{p+2\tau+1\}},\ldots,4_{\{q-1\}}, 4_{\{q\}})&\\
                   \qquad \qquad \qquad\qquad\qquad\qquad\qquad\qquad\qquad\qquad\qquad\qquad\qquad\qquad\qquad \qquad \quad \textit{if}\quad \gamma=2 ;\\
                     (3_{\{1\}},4_{\{2\}}, \ldots,4_{\{\gamma-1 \}},2_{\{ \gamma \}},4_{\{\gamma +1\}},\ldots,4_{\{2\tau+p-2\}},2_{\{p+2\tau-1\}},2_{\{p+2\tau\}},4_{\{p+2\tau+1\}},\ldots,4_{\{q-1\}}, 4_{\{q\}})
                     &\\
                   \qquad \qquad \qquad\qquad\qquad\qquad\qquad\qquad\qquad\qquad\qquad\qquad\qquad\qquad\qquad \qquad \quad \textit{if}\quad  3 \leq  \gamma  \leq 2\tau+p-3 ;\\
(3_{\{1\}},4_{\{2\}}, \ldots,4_{\{2\tau \}},2_{\{ 2\tau+p-2 \}},2_{\{p+2\tau-1\}},2_{\{p+2\tau\}},4_{\{p+2\tau+1\}},\ldots,4_{\{q-1\}}, 4_{\{q\}})&\\
                   \qquad \qquad \qquad\qquad\qquad\qquad\qquad\qquad\qquad\qquad\qquad\qquad\qquad\qquad\qquad \qquad \quad \textit{if}\quad   \gamma=2\tau+p-2 ;\\
(3_{\{1\}},4_{\{2\}}, \ldots,4_{\{2\tau+p-2\}},2_{\{p+2\tau-1\}},2_{\{p+2\tau\}},2_{\{ p+2\tau+1 \}},4_{\{ p+2\tau+2 \}},\ldots,4_{\{q-1\}}, 4_{\{q\}})&\\
                   \qquad \qquad \qquad\qquad\qquad\qquad\qquad\qquad\qquad\qquad\qquad\qquad\qquad\qquad\qquad \qquad \quad {if}\quad  \gamma=p+2\tau+1;\\
                    (3_{\{1\}},4_{\{2\}}, \ldots,4_{\{2\tau+p-2\}},2_{\{p+2\tau-1\}},2_{\{p+2\tau\}},4_{\{p+2\tau+1\}},\ldots,4_{\{\gamma-1 \}},2_{\{ \gamma \}},4_{\{\gamma +1\}},\ldots,4_{\{q-1\}}, 4_{\{q\}})&\\
                   \qquad \qquad \qquad\qquad\qquad\qquad\qquad\qquad\qquad\qquad\qquad\qquad\qquad\qquad\qquad \qquad \quad \textit{if}\quad p+2\tau+2 \leq  \gamma  \leq q-2;\\
                     (3_{\{1\}},4_{\{2\}}, \ldots,4_{\{2\tau+p-2\}},2_{\{p+2\tau-1\}},2_{\{p+2\tau\}},4_{\{p+2\tau+1\}},\ldots, 4_{\{q-2\}}, 2_{\{q-1\}}, 2_{\{q\}})&\\
                   \qquad \qquad \qquad\qquad\qquad\qquad\qquad\qquad\qquad\qquad\qquad\qquad\qquad\qquad\qquad \qquad \quad \textit{if}\quad  \gamma = q-1  .\\
                     \end{cases}
                    \end{equation*}
                    \noindent The metric coordinates for the vertices 
                     $\{s^\tau_\gamma: \tau =\frac{p-3}{2} $ $\&$  $1\leq \gamma\leq q-1 \}$ are given below
                        \begin{equation*}
                        \delta(s^{\tau}_{\gamma}|E)=
                   \begin{cases}
                   (1_{\{1\}},4_{\{2\}}, \ldots, 4_{\{2p-5 \}},2_{\{2p-4 \}}, \ldots,2_{\{q-3\}},2_{\{q-1\}}, 4_{\{q\}})&\textit{if}\quad \gamma= 1;\\
                   (3_{\{1\}},2_{\{ 2\}},4_{\{3\}}, \ldots, 4_{\{2p-5 \}},2_{\{2p-4 \}}, \ldots,2_{\{q-3\}},2_{\{q-1\}}, 4_{\{q\}})&\textit{if}\quad    \gamma =2  ;\\
                   (3_{\{1\}},4_{\{2\}}, \ldots,4_{\{\gamma-1\}},2_{\{ \gamma\}},4_{\{\gamma +1\}}, \ldots, 4_{\{2p-5 \}},2_{\{2p-4 \}}, \ldots,2_{\{q-3\}},2_{\{q-1\}}, 4_{\{q\}})&\textit{if}\quad  3 \leq  \gamma  \leq 2p-6  ;\\
                   3_{\{1\}},4_{\{2\}}, \ldots,4_{\{2p-6\}},2_{\{ 2p-5\}},2_{\{2p-4 \}}, \ldots,2_{\{q-3\}},2_{\{q-1\}}, 4_{\{q\}})&\textit{if}\quad  \gamma = 2p-5.\\
                    \end{cases}
                    \end{equation*}
\noindent The metric coordinates for the vertices 
                     $\{s^\tau_\gamma: \tau =\frac{p-1}{2} $ $\&$  $1\leq \gamma\leq q-1 \}$ are given below
                        \begin{equation*}
                        \delta(s^{\tau}_{\gamma}|E)=
                   \begin{cases}
                   (1_{\{1\}},4_{\{2\}},\ldots, 4_{\{2p-3 \}},4_{\{2p-2 \}}, \ldots,4_{\{q-3\}},4_{\{q-1\}}, 2_{\{q\}})&\textit{if}\quad\gamma= 1;\\
                   (3_{\{1\}},2_{\{ 2\}},4_{\{3\}}, \ldots,4_{\{q-1\}}, 2_{\{q\}})&\textit{if}\quad    \gamma =2;\\
                   (3_{\{1\}},4_{\{2\}}, \ldots,4_{\{\gamma-1\}},2_{\{ \gamma\}},4_{\{\gamma +1\}}, \ldots,4_{\{q-1\}}, 2_{\{q\}})&\textit{if}\quad  3 \leq  \gamma  \leq q-2  ;\\
                      (3_{\{1\}},4_{\{2\}},\ldots,4_{\{q-2\}},2_{\{q-1\}}, 0_{\{q\}})&\textit{if}\quad   \gamma= q-1  .\\
                    \end{cases}
                    \end{equation*}

\noindent The metric coordinates for the vertices 
                     $\{t^1_\gamma: 1\leq \gamma\leq q-1 \}$ are given below
                        \begin{equation*}
                        \delta(t^{1}_{\gamma}|E)=
                   \begin{cases}
                   (1_{\{1\}},2_{\{2\}},2_{\{3\}},4_{\{4\}},\ldots,4_{\{q-1\}}, 4_{\{q\}})&\textit{if}\quad\gamma= 1;\\
                     (3_{\{1\}},2_{\{2\}}, 2_{\{3 \}},2_{\{ 4 \}}, 4_{\{ 5 \}},\ldots,4_{\{q-1\}}, 4_{\{q\}})&\textit{if}\quad \gamma= 4;\\
                     (3_{\{1\}},2_{\{2\}}, 2_{\{3 \}},4_{\{ 4 \}}, \ldots, 4_{\{\gamma-1 \}},2_{\{\gamma \}}, 4_{\{\gamma+1 \}},\ldots,4_{\{q-1\}}, 4_{\{q\}})&\textit{if}\quad  5 \leq  \gamma  \leq q-2  ;\\
                     (3_{\{1\}},2_{\{2\}},2_{\{3\}} 4_{\{4\}},\ldots, 4_{\{q-2\}}2_{\{q-1\}}, 2_{\{q-1\}}) &\textit{if}\quad\gamma= q-1.\\
                \end{cases}
                    \end{equation*}

                    \noindent The metric coordinates for the vertices 
                     $\{t^2_\gamma: 1\leq \gamma\leq q-1 \}$ are given below
                        \begin{equation*}
                        \delta(t^{2}_{\gamma}|E)=
                   \begin{cases}
                   (1_{\{1\}},4_{\{2\}},4_{\{3\}},2_{\{4\}}, 2_{\{5\}}, 4_{\{6\}}, \ldots,4_{\{q-1\}}, 4_{\{q\}})&\textit{if}\quad\gamma= 1;\\
                   (3_{\{1\}},2_{\{2\}},4_{\{3\}},2_{\{4\}}, 2_{\{5\}}, 4_{\{6\}}, \ldots,4_{\{q-1\}}, 4_{\{q\}})&\textit{if}\quad\gamma= 2;\\
                     (3_{\{1\}},4_{\{2\}},2_{\{3\}},2_{\{4\}}, 2_{\{5\}}, 4_{\{6\}}, \ldots,4_{\{q-1\}}, 4_{\{q\}})&\textit{if}\quad\gamma= 3;\\
                     (3_{\{1\}},4_{\{2\}},4_{\{3\}},2_{\{4\}}, 2_{\{5\}}, 2_{\{6\}}, 4_{\{7\}}  \ldots,4_{\{q-1\}}, 4_{\{q\}})&\textit{if}\quad\gamma= 6;\\
                     
                   (3_{\{1\}},4_{\{2\}}, 4_{\{3 \}},2_{\{ 4 \}}, 2_{\{ 5 \}}, 4_{\{ 6 \}}, \ldots, 4_{\{\gamma-1 \}}2_{\{\gamma \}}, 4_{\{\gamma+1 \}},\ldots,4_{\{q-1\}}, 4_{\{q\}})&\textit{if}\quad  7 \leq  \gamma  \leq q-2  ;\\
                     (3_{\{1\}},4_{\{2\}},4_{\{3\}},2_{\{4\}}, 2_{\{5\}}, 4_{\{6\}}, \ldots 4_{\{q-2\}} ,2_{\{q-1\}}, 2_{\{q\}})&\textit{if}\quad\gamma= q-1.\\
                    \end{cases}
                    \end{equation*}

                     \noindent The metric coordinates for the vertices 
                     $\{t^\tau_\gamma:3\leq \tau\leq \frac{p-1}{2}$  $\&$  $1\leq \gamma\leq q-1 \}$ are given below
                        \begin{equation*}
                        \delta(t^{\tau}_{\gamma}|E)=
                   \begin{cases}
                   (1_{\{1\}},4_{\{2\}},\ldots, 4_{\{2\tau-1\}},2_{\{2\tau\}},2_{\{2\tau+1\}},4_{\{2\tau+2\}},\ldots,4_{\{q-1\}}, 4_{\{q\}})&\\
                   \qquad \qquad \qquad\qquad\qquad\qquad\qquad\qquad\qquad\qquad\qquad\qquad\qquad\qquad\qquad \qquad \quad  \textit{if}\quad \gamma= 1;\\
                     (3_{\{1\}},2_{\{ 2 \}},4_{\{3\}}, \ldots, 4_{\{2\tau-1\}},2_{\{2\tau\}},2_{\{2\tau+1\}},4_{\{2\tau+2\}},\ldots,4_{\{q-1\}}, 4_{\{q\}})&\\
                   \qquad \qquad \qquad\qquad\qquad\qquad\qquad\qquad\qquad\qquad\qquad\qquad\qquad\qquad\qquad \qquad \quad  \textit{if}\quad\gamma= 2;\\
                   (3_{\{1\}},4_{\{2\}}, \ldots,4_{\{\gamma-1 \}},2_{\{ \gamma \}},4_{\{\gamma +1\}}, \ldots, 4_{\{2\tau-1\}},2_{\{2\tau\}},2_{\{2\tau+1\}},4_{\{2\tau+2\}},\ldots,4_{\{q-1\}}, 4_{\{q\}})&\\
                   \qquad \qquad \qquad\qquad\qquad\qquad\qquad\qquad\qquad\qquad\qquad\qquad\qquad\qquad\qquad \qquad \quad \textit{if}\quad  3 \leq  \gamma  \leq 2\tau-2  ;\\
                      (3_{\{1\}},4_{\{2\}}, \ldots,4_{\{2\tau-2 \}},2_{\{ 2\tau-1 \}},2_{\{2\tau\}},2_{\{2\tau+1\}},4_{\{2\tau+2\}},\ldots,4_{\{q-1\}}, 4_{\{q\}})&\\
                   \qquad \qquad \qquad\qquad\qquad\qquad\qquad\qquad\qquad\qquad\qquad\qquad\qquad\qquad\qquad \qquad \quad \textit{if}\quad \gamma= 2\tau-1 ;\\
                     (3_{\{1\}},4_{\{2\}}, \ldots,4_{\{2\tau-1\}},2_{\{2\tau\}},2_{\{2\tau+1\}},2_{\{ 2\tau+2 \}}, 4_{\{2\tau+3 \}},\ldots,4_{\{q-1\}}, 4_{\{q\}})&\\
                   \qquad \qquad , \qquad\qquad\qquad\qquad\qquad\qquad\qquad\qquad\qquad\qquad\qquad\qquad\qquad \qquad \quad  \textit{if}\quad \gamma= 2\tau+2 ;\\
                    (3_{\{1\}},4_{\{2\}}, \ldots,4_{\{2\tau-1\}},2_{\{2\tau\}},2_{\{2\tau+1\}},4_{\{2\tau+2\}},\ldots,4_{\{\gamma-1 \}},2_{\{ \gamma \}},4_{\{\gamma +1\}},\ldots,4_{\{q-1\}}, 4_{\{q\}})&\\
                   \qquad \qquad \qquad\qquad\qquad\qquad\qquad\qquad\qquad\qquad\qquad\qquad\qquad\qquad\qquad \qquad \quad  \textit{if}\quad 2\tau+3 \leq  \gamma  \leq q-2  ;\\
                     % (3_{\{1\}},4_{\{2\}}, \ldots,4_{\{2\tau-1\}},2_{\{2\tau\}},2_{\{2\tau+1\}},4_{\{2\tau+2\}},\ldots,4_{\{q-3 \}},2_{\{ q-1 \}})&\\
                     % \qquad  \qquad \qquad \qquad \qquad \qquad \qquad \qquad \qquad \qquad  \qquad \qquad \qquad \qquad \qquad  \gamma=  q-2  ;\\
                    (3_{\{1\}},4_{\{2\}}, \ldots,4_{\{2\tau-1\}},2_{\{2\tau\}},2_{\{2\tau+1\}},4_{\{2\tau+2\}},\ldots, 4_{\{q-2\}}),2_{\{q-1\}}, 2_{\{q\}}) &\\
                   \qquad \qquad \qquad\qquad\qquad\qquad\qquad\qquad\qquad\qquad\qquad\qquad\qquad\qquad\qquad \qquad \quad \textit{if}\quad\gamma= q-1.\\
                    \end{cases}
                    \end{equation*}
                    
                \noindent The metric coordinates for the vertices 
                     $\{u_\tau:1\leq \tau\leq {p-1} \}$ are given below
                        \begin{equation*}
                        \delta(u_{\tau}|E)=
                   \begin{cases}
(2_{\{1\}},1_{\{2\}}, 1_{\{3\}}, 3_{\{4\}},\ldots,3_{\{q-1\}}, 3_{\{q\}})&\textit{if}\quad  \tau=1 ;\\
                   (2_{\{1\}},3_{\{2\}}, \ldots, 3_{\{2\tau-1\}},1_{\{2\tau\}},1_{\{2\tau+1\}},3_{\{2\tau+2\}},\ldots,3_{\{q-1\}}, 3_{\{q\}})&\textit{if}\quad  2 \leq \tau  \leq p-3 ;\\
                     (2_{\{1\}},3_{\{2\}}, \ldots, 3_{\{2\tau-1\}},1_{\{2\tau\}},\ldots,1_{\{q-1\}}, 3_{\{q\}})&\textit{if}\quad  \tau= p-2 ;\\
                     (2_{\{1\}},3_{\{2\}}, \ldots, 3_{\{q-1\}}, 1_{\{q\}})&\textit{if}\quad  \tau= p-1.\\
                     \end{cases}
                     \end{equation*}
                     
\noindent After analyzing the above codes, it is clear that removing any vertex in set $E$ (i.e., $E-\{a\}$, where $a\in E$ ) while ensuring that all remaining vertices have unique metric coordinates with respect to $E-\{a\}$ preserves the property of the fault- tolerance in $E$. This implies that $E$ is a FTRS, leading to $fdim(BS\Gamma(R)) \leq q$. Furthermore, based on Proposition \ref{ET} and Equation \ref{a}, it is clear that the FTMD of $\ BS(\Gamma(R) )$ is at least \( q \) when $q=2p-3$. Therefore, we can conclude that \( E \) is a minimum FTRS, i.e., \( fdim(BS\Gamma(R)) = q  \). Thus for the case $q=2p-3$, $fdim(BS\Gamma(R))= q$. \\ 
\end{proof}
\section {Conclusion}
\noindent Examining the FTMD of a graph is essential for ensuring effective communication and control in systems that may face node failures. In this paper, we conduct an in-depth study of the FTMD of the barycentric subdivision of zero divisor graph denoted by \( fdim(BS(\Gamma(\mathbb{Z}_{pq}))) \), where \( p \) and \( q \) are two distinct odd prime numbers with the condition that \( q \) is greater than \( p \). We establish that when \( q \) $>$ \( 2p - 1 \), then \( fdim(BS(\Gamma(\mathbb{Z}_{pq}))) = q - 1 \). This finding is significant, as it provides a clear metric indicating how node failures can impact the overall functioning of the graph. Additionally, in the scenario where \( p \) is greater than or equal to 3 and \( q \) equals \( 2p - 1 \) $\&$ $2p - 3$, we have demonstrated that \( fdim(BS(\Gamma(\mathbb{Z}_{pq}))) = q \). This result underscores the relationship between the primes \( p \) and \( q \) and their implications for fault tolerance within the graph. Investigating the FTMD of the barycentric subdivision of $\Gamma(\mathbb{Z}_{pq})$, where $p$ and $q$ are two distinct odd primes and $p< q \leq 2p-5$ will be an exciting topic for future research. 
% It may provide beneficial information on the stability of complex networks under various structural failure scenarios.
\section*{Authors’ Contributions}
Vidya: conceptualization, methodology, investigation, writing–original draft, writing–review
and editing. Sunny Kumar Sharma: Supervision, conceptualization, methodology, and validation. Prasanna Poojary: Supervision, conceptualization, validation, writing–review and editing. Vadiraja Bhatta G R: Supervision, writing–review and editing.

\section*{Use of Generative-AI tools declaration}
The authors declare that they have not used Artificial Intelligence (AI) tools in the creation of this
article.
\section*{Data Availability}
Data sharing does not apply to this manuscript because no data sets were analysed or generated during this particular study.
\section*{Conflicts of Interest}
Authors have nothing to declare as a conflict of interest.

% \section*{Acknowledgements}
% All the authors are thankful to their institute for providing a learning environment and continuous encouragement.
  				
	% \bibliographystyle{ieeetr}
	% \bibliography{name.bib}
   
  \end{document}